\newtheorem{theorem}{Theorem}[section]
\newtheorem{corollary}[theorem]{Corollary}
\newtheorem{lemma}[theorem]{Lemma}
\title{Chromatic Polynomials of $2$-edge-coloured Graphs}
\author{Iain Beaton$^1$}
\address{$^1$Department of Mathematics and Statistics, Dalhousie University, Canada}
\author{Danielle Cox$^2$}
\address{$^2$Mathematics Department, Mount Saint Vincent University, Canada; Research supported by the Natural Science and Engineering Council of Canada}
\author{Christopher Duffy$^3$}
\address{$^3$Department of Mathematics and Statistics, University of Saskatchewan, Canada; Research supported by the Natural Science and Engineering Council of Canada}
\author{Nicole Zolkavich$^4$}
\address{$^4$Department of Mathematics and Statistics, McGill University, Canada}
\begin{document}

\begin{abstract}
	Using the definition of colouring of $2$-edge-coloured graphs derived from $2$-edge-coloured graph homomorphism, we extend the definition of chromatic polynomial to $2$-edge-coloured graphs. 
	We find closed forms for the first three coefficients of this polynomial that generalize the known results for the chromatic polynomial of a graph.
	We classify those $2$-edge-coloured graphs that have a chromatic polynomial equal to the chromatic polynomial of the underlying graph, when every vertex is incident to edges of both colours.
	Finally, we examine the behaviour of the roots of this polynomial, highlighting behaviours not seen in chromatic polynomials of graphs.
\end{abstract}

\maketitle

\section{Introduction and Preliminary Notions}

A \emph{$2$-edge-coloured graph $G$} is a triple $(\Gamma,R_G,B_G)$ where $\Gamma$ is a simple graph,  $R_G \subseteq E(\Gamma)$, and $B_G \subseteq E(\Gamma)$ so that $R_G \cap B_G = \emptyset$  and $R_G\cup B_G = E(\Gamma)$.
We call $G$ a \emph{$2$-edge-colouring} of $\Gamma$.
We let $G[R_G]$ and $G[B_G]$ be the simple graph induced, respectively, by the set of red and blue edges.
We call $\Gamma$ the simple graph \emph{underlying} $G$.
We note  $\{R_G,B_G\}$ need not be a partition of $E(\Gamma)$; we permit $R_G =\emptyset$ or $B_G =\emptyset$.
In the case that $\{R_G,B_G\}$ is a partition of $E(\Gamma)$, we say that $G$ is \emph{bichromatic}.
Otherwise we say $G$ is \emph{monochromatic.}
When there is no chance for confusion we refer to $R_G$ and $B_G$ as $R$ and $B$, respectively.
Herein we assume all graphs are loopless and have no parallel arcs.
Thus we drop the descriptor of simple when we refer to simple graphs.
In various corners of the literature \cite{moprs08,S14} $2$-edge-coloured graphs are referred to as \emph{signified graphs} to highlight the absence of a switching operation as in \cite{N15}.

For other notation not defined herein, we refer the reader to \cite{bondy}.
Throughout we generally use Greek majuscules to refer to graphs and Latin majuscules to refer to $2$-edge-coloured graphs.

Let  $G=\left(\Gamma_G,R_G,B_G\right)$ and $H=\left(\Gamma_H,R_H,B_H\right)$ be $2$-edge-coloured graphs.
There is a \emph{homomorphism} of $G$  to $H$  when there exists a homomorphism $\phi:\Gamma_G \to \Gamma_H$ so that $\phi: G[R_G] \to H[R_H]$ and $\phi: G[B_G] \to H[B_H]$ are both graph homomorphisms. 
That is, a homomorphism of $G$ to $H$ is a vertex mapping from $V(\Gamma_G)$ to $V(\Gamma_H)$ that preserves the existence of edges as well as their colours. 
When $H$ has $k$ vertices we call $\phi$ a \emph{$k$}-colouring of $G$.

Equivalently, one may view a $k$-colouring of a $2$-edge-coloured graph $G= (\Gamma,R,B)$ as a function $c: V(G) \to \{1,2,3,\dots, k\}$ satisfying the following two conditions
\begin{enumerate}
	\item for all $yz \in E(\Gamma)$, we have $c(y) \neq c(z)$; and
	\item  for all $ux\in R$ and $vy \in B$, if $c(u) = c(v)$ then $c(x) \neq c(y)$.
\end{enumerate}

Such a proper colouring defines a homomorphism to  $2$-edge-coloured target $H$ with vertex set $\{1,2,3, \dots, k\}$ where $ij \in R_H$ (respectively, $\in B_H$) if and only if there is an edge $wx \in R_G$ (respectively, $\in B_G$) so that $c(w) = i$ and $c(x) = j$.

The \emph{chromatic number of $G$}, denoted $\chi(G)$, is the least integer $t$ such that $G$ admits a $t$-colouring.
Observe that when $G$ is monochromatic, the definitions above are equivalent to those for graph homomorphism, $k$-colouring and chromatic number.
Thus the choice of notation, $\chi$, to denote the chromatic number is indeed appropriate.

We note that for a $2$-edge-coloured graph $G$, the chromatic number of $G = \left(\Gamma,R,B\right)$ may differ vastly from that of $\Gamma$.
There exist $2$-edge-colourings of bipartite graphs that have chromatic number equal to their number of vertices \cite{S14}.

Homomorphisms of edge-coloured graphs have received increasing attention in the literature in the last twenty-five years.
Early work by Alon and Marshall \cite{AM98} gave an upper bound on the chromatic number of a $2$-edge-coloured planar graph.
More recent work has bounded the chromatic number of two-edge-coloured graphs from a variety of graph families \cite{moprs08,O17} as well as considered questions of computational complexity \cite{B17,B00}.

Given the definition of colouring given above as proper colouring of the underlying graph satisfying extra constraints, one can enumerate, for fixed positive integer $k$, the number of $k$-colourings of a $2$-edge-coloured graph $G$.
This observation gives rise to a reasonable definition of chromatic polynomial for a $2$-edge-coloured graph.
As the definition of homomorphism and colouring of $2$-edge-coloured graphs fully captures those for graphs, we notice  results for the chromatic polynomial of $2$-edge-coloured graphs necessarily generalize those for graphs.

To study the chromatic polynomial for $2$-edge-coloured graphs, we introduce a chromatic polynomial of a more generalized graph object.
A \emph{mixed $2$-edge-coloured graph} is a pair $M = (G,F_M$) where $G$ is  $2$-edge-coloured graph where $G = (\Gamma,R_G,B_G)$ and   $F_M \subseteq \overline{E(\Gamma)}$.
We consider $F_M$ as a set of edges that are contained neither in $R_G$ nor in $B_G$.
We denote by $S(M)$ the graph with vertex set $V(\Gamma)$ and edge set $R_G \cup B_G \cup F_M$.
When there is a no chance for confusion we refer to $F_M$ as $F$.

At this point it may be tempting to consider a mixed $2$-edge-coloured graph as a $3$-edge-coloured graph.
We resist this temptation as the following definition of $k$-colouring of a mixed $2$-edge-coloured graph does not match what one would expect for a $3$-edge-coloured graph.

Let $M = (G,F)$ be a mixed $2$-edge-coloured graph with $G = \left(\Gamma, R, B\right)$
We define a \emph{$k$-colouring} of $M$ to be a function $c:V(G) \to \{1,2,3, \dots , k\}$ so that
\begin{enumerate}
	\item $c(u) \neq c(v)$ for all $uv \in R \cup B \cup F$; and 
	\item for all $ux\in R$ and  for all $vy \in B$, if $c(u) = c(v)$ then $c(x) \neq c(y)$.
\end{enumerate}


Informally, $c$ is a $k$-colouring of the $2$-edge-coloured graph $G$ with the extra condition that vertices at the ends of an element of $F$ receive different colours.
Notice when $M = \emptyset$, we have that $c$ is a $k$-colouring of $G$.
Further notice when $R = B = \emptyset$ we have that $c$ is a $k$-colouring of the graph with vertex set $V(\Gamma)$ and edge set $F$.

Let $M$ be a mixed $2$-edge-coloured graph and let $P(M,\lambda)$ be the unique interpolating polynomial so that for every non-negative integer $k$, we have that $P(M,k)$ is the number of $k$-colourings of $M$.
We refer to $P(M,\lambda)$ as the chromatic polynomial of $M$.
Notice  when $F_M = \emptyset$, the polynomial $P(M,\lambda)$ enumerates colourings of a $2$-edge-coloured graph and when $R = B = \emptyset$, this polynomial is identically the chromatic polynomial of the graph with vertex set $V(\Gamma)$ and edge set $F$.

Our work proceed as follows.
In Section \ref{sec:chromPoly} we study properties of chromatic polynomials of mixed $2$-edge-coloured graphs.
We use a recurrence reminiscent of the standard recurrence for the chromatic polynomial of a graph to give closed forms for the first three coefficients of this polynomial.
These results generalize known results for chromatic polynomials of graphs.
As every colouring of a $2$-edge-coloured graph is a colouring of the underlying graph, we are led naturally to considering those $2$-edge-coloured graphs that have the same chromatic polynomial as their underlying graph.
We study this problem in Section \ref{sec:chromInvar}.
For $2$-edge-coloured graphs where each vertex is incident with both a red edge and a blue edge,  we find  those $2$-edge-coloured graphs that have the same chromatic polynomial as their underlying graph to be those $2$-edge-coloured graphs that arise as a $2$-edge-colouring of a join.
In Section \ref{sec:roots} we study roots of chromatic polynomials of $2$-edge-coloured graphs.
We find that closure of the real roots of the $2$-edge-coloured chromatic polynomials to be $\mathbb{R}$ and that the non-real roots can have arbitrarily large modulus.
Finally in Section \ref{sec:discuss} we compare our results for chromatic polynomials with those for graphs and oriented graphs.
We conjecture on the properties of the chromatic polynomial of an $(m,n)$-mixed graph.

\section{The Chromatic Polynomial of a mixed $2$-edge-coloured Graph}\label{sec:chromPoly}

Our study of the chromatic polynomial of a mixed $2$-edge-coloured graph begins by examining two cases for (2) in the definition of a $k$-colouring of a mixed $2$-edge-coloured graph.

Consider first the case $u=v$.
In this case this condition enforces that vertices at the end of an induced bichromatic path with three vertices receive different colours in an colouring.
Let $M = \left(G,F\right)$ be a mixed $2$-edge-coloured graph.
An induced path $xuy$ is called a \emph{bichromatic $2$-path} when $xu \in R$ and $uy \in B$.
Let $\mathcal{P}_G$ be the set of bichromatic $2$-paths of $G$.

Let $M = (G,F)$ be a mixed $2$-edge-coloured graph with $n$ vertices.
If every pair of vertices of $M$ is either adjacent in $M$ or at the ends of a bichromatic $2$-path in $G$, then in any colouring of $M$ each vertex receives a distinct colour.
Thus

\begin{equation} \label{math:complete}
 P(M,\lambda) =  \Pi_{i =0}^{n-1} \left(\lambda-i\right) 
\end{equation}

As every vertex of $M$ receives a distinct colour in every colouring of $M$, the chromatic polynomial of $M$ is exactly that of $K_n$.
We return to this observation in the proofs of Theorems \ref{thm:secondcoeff} and \ref{thm:thirdcoeff}.

Let $x$ and  $y$ be a pair of vertices that are neither adjacent in $M$ nor at the ends of a bichromatic $2$-path in $G$.
The $k$-colourings of $M$ can be partitioned into those in which $x$ and $y$ receive the same colour and those in which $u$ and $v$ receive different colours.
Thus

\begin{equation} \label{math:reduce}
P(M,\lambda) =  P(M+xy,\lambda) +  P(M_{xy},\lambda)
\end{equation}

where
\begin{itemize}
	\item $M+xy$ is the mixed $2$-edge-coloured graph formed from $M$ by adding $xy$ to $F$; and
	\item $M_{xy}$ is the mixed $2$-edge-coloured graph formed from identifying vertices $x$ and $y$ and deleting any edge that is parallel with a coloured edge.
\end{itemize}

See Figure \ref{fig:exampleFig} for a sample computation.
Elements of $R$ and $B$ are denoted respectively by dotted and dashed lines.
Elements of $F$ are denoted by solid lines.
Here we follow the usual convention for chromatic polynomials of having the picture of the graph stand in for its polynomial.

\begin{figure}
	\begin{center}
		\includegraphics[width=0.75\linewidth]{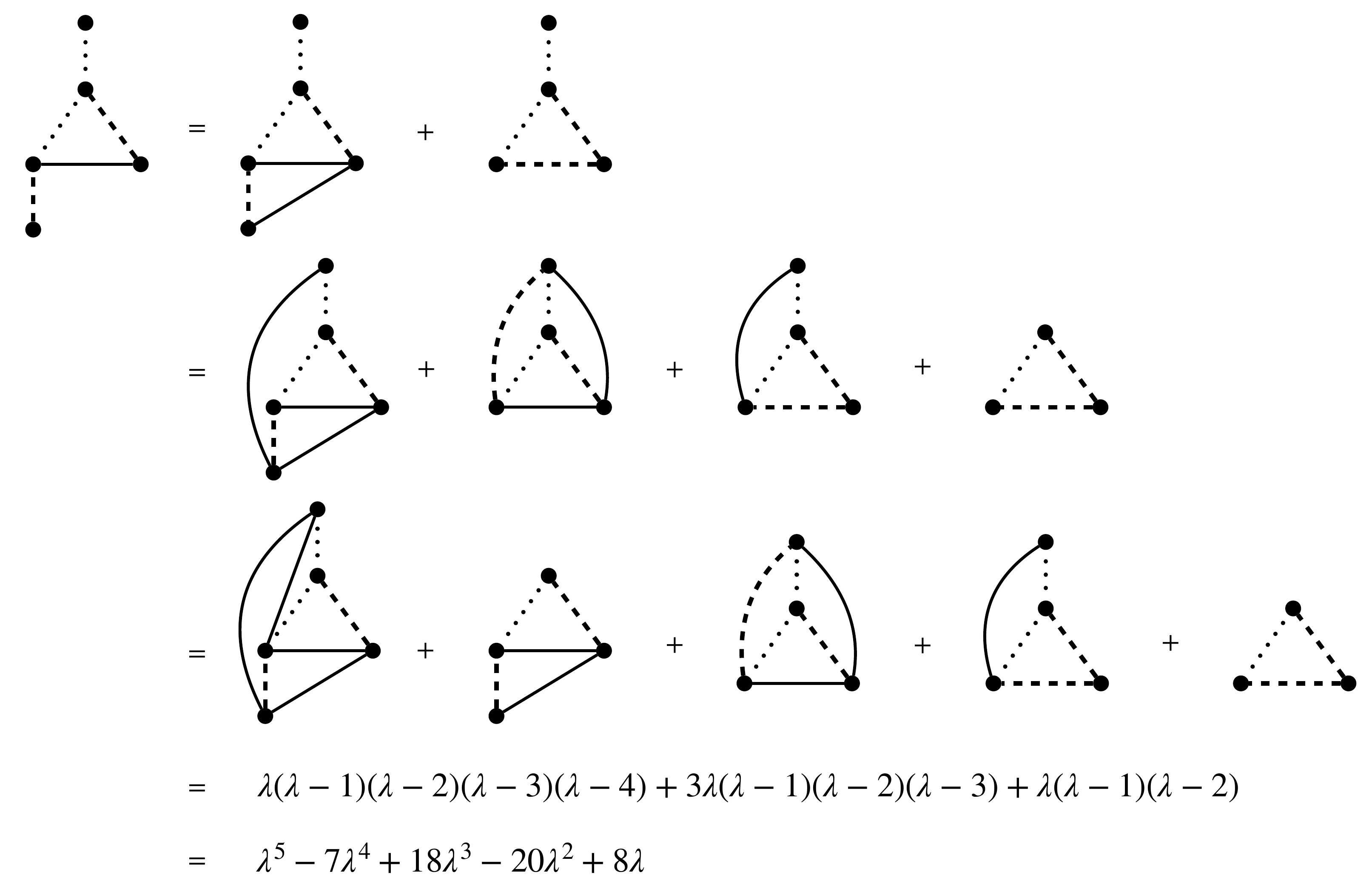}
		\caption{Computing the chromatic polynomial of a mixed $2$-edge-coloured graph}
		\label{fig:exampleFig}
	\end{center}
\end{figure}

From Equations (\ref{math:complete}) and (\ref{math:reduce}) we directly obtain the following.

\begin{theorem}\label{thm:basicResults}
	Let $M = (G,F)$ be a mixed graph with $n$ vertices.
	\begin{enumerate}
		\item  $P(M,\lambda)$ is a polynomial of degree $n$ in $\lambda$;
		\item the coefficient of $\lambda^n$ is $1$; and
		\item  $P(M,\lambda)$ has no constant term.
	\end{enumerate}
\end{theorem}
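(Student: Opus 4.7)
The plan is to prove all three conclusions simultaneously by strong induction on the lexicographic pair $(n, m)$, where $n = |V(M)|$ and $m$ counts unordered pairs of vertices of $M$ that are neither adjacent in $M$ nor joined by a bichromatic $2$-path in $G$; I will call such pairs \emph{free}. Equations (\ref{math:complete}) and (\ref{math:reduce}) provide precisely the required machinery: the first disposes of the base case $m=0$, while the second drives the induction step.

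For the base cases I handle $n = 1$ (where $P(M,\lambda) = \lambda$ trivially satisfies all three conclusions) and, for any $n$, the case $m = 0$. In the latter, (\ref{math:complete}) yields $P(M,\lambda) = \prod_{i=0}^{n-1}(\lambda - i)$, which is manifestly monic of degree $n$ and divisible by $\lambda$, hence has no constant term.

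For the inductive step I take $n \geq 2$ and $m \geq 1$, pick any free pair $\{x,y\}$, and apply (\ref{math:reduce}):
\[
P(M,\lambda) = P(M+xy,\lambda) + P(M_{xy},\lambda).
\]
The summand $P(M+xy,\lambda)$ corresponds to a mixed $2$-edge-coloured graph with the same $n$ vertices but exactly $m-1$ free pairs (adjoining $xy$ to $F$ makes $x,y$ adjacent, and no new free pairs are created because adjacencies are only added and the set of bichromatic $2$-paths in the underlying $2$-edge-coloured graph is unchanged). The summand $P(M_{xy},\lambda)$ corresponds to a mixed $2$-edge-coloured graph on $n-1$ vertices. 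Both arguments are lexicographically strictly smaller than $(n,m)$, so the inductive hypothesis applies: $P(M+xy,\lambda)$ is monic of degree $n$ with zero constant term, and $P(M_{xy},\lambda)$ is monic of degree $n-1$ with zero constant term. Summing, the coefficient of $\lambda^n$ in $P(M,\lambda)$ is $1 + 0 = 1$, the degree is $n$, and the constant term is $0 + 0 = 0$.

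The only real subtlety — and hardly an obstacle — is confirming that the induction is well-founded, i.e.\ that the parameter $(n,m)$ strictly decreases under each application of (\ref{math:reduce}). This is immediate: $(n, m-1) <_{\mathrm{lex}} (n,m)$ for the $M+xy$ branch and $(n-1,\ast) <_{\mathrm{lex}} (n,m)$ for the $M_{xy}$ branch. With that observation the theorem is an immediate consequence of the two preceding identities.
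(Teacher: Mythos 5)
Your proof is correct and follows exactly the route the paper intends: the paper states that Theorem \ref{thm:basicResults} is obtained directly from Equations (\ref{math:complete}) and (\ref{math:reduce}), and your lexicographic induction on the number of vertices and the number of non-adjacent, non-bichromatic-$2$-path pairs is the standard way to make that deduction precise. The verification that the $M+xy$ branch strictly decreases the second parameter while leaving the first fixed, and that the $M_{xy}$ branch decreases the first, is exactly the right bookkeeping, so nothing further is needed.
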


As with chromatic polynomials, the coefficient of $\lambda^{n-1}$ is easily computed.

\begin{theorem}\label{thm:secondcoeff}
	For a mixed graph $M = (G,F)$ the coefficient of $\lambda^{n-1}$ is given by \[-\left(|R_G| + |B_G| + |F_M| + |\mathcal{P}_G| \right).\]
\end{theorem}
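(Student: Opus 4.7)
My plan is to proceed by induction on the integer $d(M)$, defined as the number of unordered pairs $\{x,y\}$ of vertices of $M$ that are neither adjacent in $M$ nor the endpoints of any bichromatic $2$-path in $G$. The base case $d(M)=0$ will be handled by Equation~(\ref{math:complete}), and the inductive step by Equation~(\ref{math:reduce}).

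For the inductive step, assume $d(M) \geq 1$ and choose a pair $\{x,y\}$ witnessing the defect. The recurrence gives $P(M,\lambda) = P(M+xy,\lambda) + P(M_{xy},\lambda)$. The mixed $2$-edge-coloured graph $M+xy$ shares the same underlying $G$ with $M$, so $|R_G|$, $|B_G|$, and $|\mathcal{P}_G|$ are preserved, while $|F_{M+xy}| = |F_M|+1$ and $d(M+xy) = d(M) - 1$; by the inductive hypothesis the $\lambda^{n-1}$-coefficient of $P(M+xy,\lambda)$ is $-(|R_G| + |B_G| + |F_M| + 1 + |\mathcal{P}_G|)$. The second summand $M_{xy}$ has only $n-1$ vertices, so by Theorem~\ref{thm:basicResults} its chromatic polynomial is monic of degree $n-1$ and contributes exactly $+1$ to the $\lambda^{n-1}$-coefficient of the sum. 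Adding these contributions, the $+1$ cancels one unit inside the parenthesised expression and yields the claimed $-(|R_G|+|B_G|+|F_M|+|\mathcal{P}_G|)$.

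For the base case $d(M)=0$, Equation~(\ref{math:complete}) gives $P(M,\lambda) = \prod_{i=0}^{n-1}(\lambda-i)$, whose $\lambda^{n-1}$-coefficient is $-\binom{n}{2}$. It therefore suffices to prove the bookkeeping identity $|R_G|+|B_G|+|F_M|+|\mathcal{P}_G| = \binom{n}{2}$, which I would establish by showing that each of the $\binom{n}{2}$ unordered vertex pairs contributes to exactly one of the four counts: pairs joined by an edge of $R_G \cup B_G$ cannot be the endpoints of a bichromatic $2$-path (such paths being required to be induced in $G$), and the remaining pairs are accounted for by $F_M$ or by $\mathcal{P}_G$. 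The main obstacle is precisely this base-case bookkeeping: one must rule out double-counting by pairs that could potentially both lie in $F_M$ and bound a bichromatic $2$-path, and confirm that each endpoint-pair witnessed by $\mathcal{P}_G$ corresponds to a unique element of $\mathcal{P}_G$ in the extremal configuration. Once this combinatorial identity is secured, the inductive machinery delivers the coefficient formula.
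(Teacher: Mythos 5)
Your inductive step is essentially the paper's own argument: apply Equation~(\ref{math:reduce}) to a pair that is neither adjacent nor the ends of a bichromatic $2$-path, read off $-(|R_G|+|B_G|+|F_M|+1+|\mathcal{P}_G|)$ from the hypothesis applied to $M+xy$, and cancel the extra $+1$ against the leading coefficient of the $(n-1)$-vertex graph $M_{xy}$ supplied by Theorem~\ref{thm:basicResults}. That part is sound.

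The gap is in your base case, and it is precisely the double-counting you flag as ``the main obstacle'': the identity $|R_G|+|B_G|+|F_M|+|\mathcal{P}_G|={n \choose 2}$ fails for some $M$ with $d(M)=0$, because a single non-adjacent pair can be the ends of \emph{several} induced bichromatic $2$-paths and then contributes more than once to $|\mathcal{P}_G|$. Concretely, take $V=\{x,u,v,y\}$ with $xu,xv,uv$ red, $uy,vy$ blue, $xy$ a non-edge and $F=\emptyset$. Every pair is adjacent or bounds a bichromatic $2$-path, so $d(M)=0$ and every colouring assigns distinct colours, giving $P(M,\lambda)=\lambda(\lambda-1)(\lambda-2)(\lambda-3)$ with $\lambda^{3}$-coefficient $-6$; yet $|R|+|B|+|F|+|\mathcal{P}_G|=3+2+0+2=7$, since both $xuy$ and $xvy$ lie in $\mathcal{P}_G$. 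So ``each endpoint-pair corresponds to a unique element of $\mathcal{P}_G$'' cannot be confirmed --- the unit of accounting has to be the pair of ends, not the path. The paper's proof never meets this configuration because it orders the reductions the other way: a minimal counterexample (maximizing $|E(S(M))|$) is first stripped of bichromatic $2$-paths by adding to $F$ the edge between the ends of any such path, an operation that does not change the set of colourings; only after $\mathcal{P}_G=\emptyset$ is secured does it invoke the complete case, where the required identity is just the trivial partition $|R_G|+|B_G|+|F_M|={n \choose 2}$. To repair your induction, either perform that same preliminary reduction to $\mathcal{P}_G=\emptyset$ before inducting on $d(M)$, or reinterpret $|\mathcal{P}_G|$ as counting endpoint pairs rather than paths (which is, in effect, what the statement needs in order to be true).
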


\begin{proof}
	We proceed considering the existence of a counterexample.
	Let $n$ be the least integer so that there exists a mixed $2$-edge-coloured graph with $n$ vertices so that the statement of the theorem is false.
	Among all counterexamples on $n$ vertices, let $M = (G,F)$ be a counterexample that maximizes the number of edges in $S(M)$.
	If $\mathcal{P}_G \neq \emptyset$, then there exists vertices $u$ and $v$ which are the ends of an induced bichromatic path in $M$.
	Let $M^\prime$ be the mixed $2$-edge-coloured graph formed from $M$ by adding $uv$ to $F$.
	We observe that every colouring of $M$ is a colouring of $M^\prime$ and also that every colouring of $M^\prime$ is a colouring of $M$.
	Thus $M$ and $M^\prime$ have the same chromatic polynomial.
	This contradicts our choice of $M$ as a counterexample on $n$ vertices with the maximum number of edges in $S(M)$.
	Thus we may assume $\mathcal{P}_G = \emptyset$.
	
	We claim there exists a pair of vertices $u$ and $v$ so that $u$ and $v$ are not adjacent in $S(M)$.
	If $u$ and $v$ do not exist, then $S(M) \cong K_n$ and so $P(M,\lambda) = P(K_n,\lambda)$.
	The coefficient of $\lambda^{n-1}$ in $P(K_n,\lambda)$ is given by $-{n \choose 2}$ \cite{R68}.
	Recall  $\mathcal{P}_G = \emptyset$.
	For $M$ we observe
	
	\[
		-\left(|R_G| + |B_G| + |F_M| + |\mathcal{P}_G| \right)	=	-{n \choose 2}
	\]
	
	Thus we consider $u,v \in V(M)$ so that $u$ and $v$ are not adjacent in $S(M)$.
	Let $c_{n-1}$ be the coefficient of $\lambda^{n-1}$ in $P(M,\lambda)$.
	By Theorem \ref{thm:basicResults}, our choice of $M$, and Equation (\ref{math:reduce}), we have 
	
	\begin{align*}
	c_{n-1} &= -\left(|R_{G}| + |B_{G}| + |F_{M+uv}| + |\mathcal{P}_{G}| \right) +1
	\end{align*}
	
	We observe  $|F_{M+uv}| = |F_{M}| + 1$.
	Simplifying yields 
	
	\begin{align*}
	c_{n-2} = 	-\left(|R_G| + |B_G| + |F_M| + |\mathcal{P}_G| \right)
	\end{align*}
Thus $M$ is not a counterexample.
And so by choice of $M$, no counterexample exists.
\end{proof}

\begin{corollary}
	For a $2$-edge-coloured graph $G$, the coefficient of $\lambda^{n-1}$ in $P(G,\lambda)$ is given by $-\left(|R_G| + |B_G| + |\mathcal{P}_G|\right)$
\end{corollary}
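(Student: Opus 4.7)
The plan is to observe that this corollary is the immediate specialization of Theorem \ref{thm:secondcoeff} to the case where the set $F$ of uncoloured mandatory-distinct edges is empty. Specifically, a $2$-edge-coloured graph $G$ can be identified with the mixed $2$-edge-coloured graph $M = (G, \emptyset)$, so that $P(G,\lambda) = P(M,\lambda)$ by the remark made after the definition of $P(M,\lambda)$.

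I would then substitute $|F_M| = 0$ into the expression from Theorem \ref{thm:secondcoeff}, obtaining that the coefficient of $\lambda^{n-1}$ in $P(G,\lambda)$ equals
\[
-\bigl(|R_G| + |B_G| + 0 + |\mathcal{P}_G|\bigr) = -\bigl(|R_G| + |B_G| + |\mathcal{P}_G|\bigr),
\]
which is the desired formula. There is no real obstacle here; the corollary is purely a reading-off of the main theorem in the special case where no $F$-edges are present, so the proof reduces to one line citing Theorem \ref{thm:secondcoeff}.
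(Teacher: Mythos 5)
Your proposal is correct and matches the paper's (implicit) reasoning exactly: the corollary is stated without proof precisely because it is the specialization of Theorem \ref{thm:secondcoeff} to $F_M = \emptyset$. Setting $|F_M| = 0$ in the theorem's formula gives the claimed coefficient immediately.
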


We return now to our examination of cases for (2) in the definition of $k$-colouring of a mixed $2$-edge-coloured graph $M = (G,F)$.
Consider now the case where $ux$ and $vy$ induce a bichromatic copy of $2K_2$ in $G$.
For $c$, a $k$-colouring of $M$, we note that $\left|\{c(u),c(x),c(v),c(z)\}\right| \in \{3,4\}$.
As the subgraph induced by $\{u,v,y,z\}$ has chromatic number $2$ in $S(M)$, such pairs of edges, in a sense, obstruct a colouring of $S(M)$ from being a colouring of $M$.

Let $M = (G,F)$ be a mixed $2$-edge-coloured  graph.
Let $\Lambda$ be the graph formed from $M$ by adding to $F$ the edge between any pair of vertices which are the ends of an induced bichromatic $2$-path in $M$ and considering coloured edges in $G$ as edges in $\Lambda$.
(See Figure \ref{fig:MMstar} for an example)

\begin{figure}
	\begin{center}
		\includegraphics[width=0.25\linewidth]{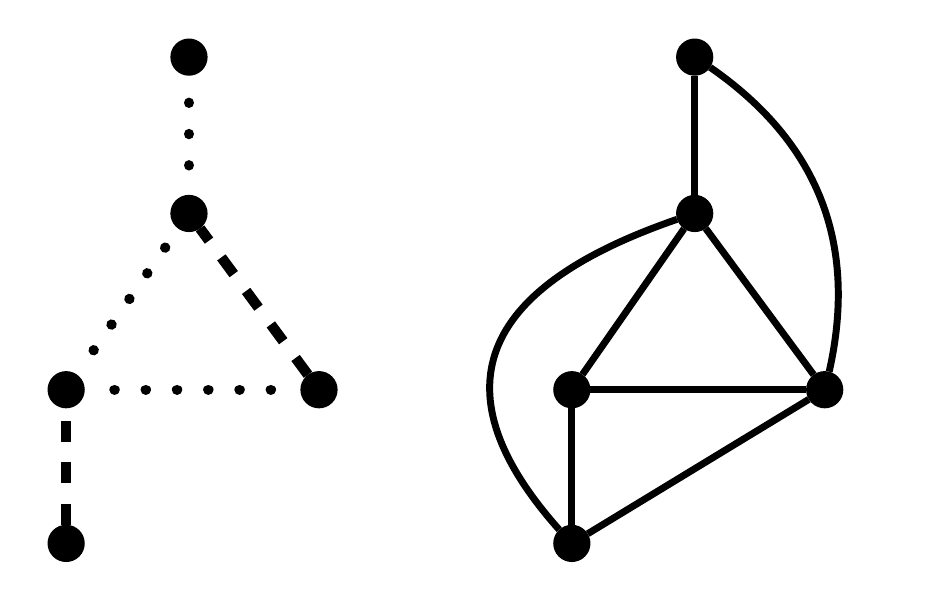}
		\caption{$\Lambda$ (right) constructed from $M$ (left)}
		\label{fig:MMstar}
	\end{center}
\end{figure}

For $ux \in R$ and $vy \in B$ we say that $ux$ and $vy$ are \emph{obstructing edges} when $\chi(\Lambda[u,x,v,y]) = 2$.
Let $\mathcal{O}_M$ be the set of pairs of obstructing edges in $M$. 
For a $2$-edge-coloured graph $G$, we let  $\mathcal{O}_G$ be the set of obstructing edges in $G$. 

Using the notion of obstructing edges, we find a closed form for the coefficient of $\lambda^{n-2}$ of a chromatic polynomial of a mixed $2$-edge-coloured graph.

\begin{theorem}\label{thm:thirdcoeff}
	For $M = (G,F)$ a mixed $2$-edge-coloured  graph, the coefficient of $\lambda^{n-2}$ in $P(M,\lambda)$ is given by
	\[
{	{|R_G| + |B_G| + |\mathcal{P}_G|} + |F_M|\choose 2}
		  - |T_{S(M)}| - |\mathcal{P}_G| - |\mathcal{O}_M|,
	\]
 	where $T_{S(M)}$ is the set of induced subgraphs of $S(M)$ isomorphic to $K_3$.
\end{theorem}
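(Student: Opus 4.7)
The argument follows the same template as Theorem \ref{thm:secondcoeff}: take a minimum counterexample $M = (G, F)$ on $n$ vertices, chosen to maximize $|E(S(M))|$. The first step is to observe that we may assume $\mathcal{P}_G = \emptyset$, since any bichromatic $2$-path $xuy$ allows us to add $xy$ to $F$ without changing the chromatic polynomial, producing a mixed graph with strictly more edges in $S(M)$ and contradicting maximality.

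For the base case $S(M) \cong K_n$, Equation (\ref{math:complete}) forces $P(M, \lambda) = \prod_{i=0}^{n-1}(\lambda - i)$, whose coefficient on $\lambda^{n-2}$ is the elementary symmetric function $e_2(0, 1, \ldots, n-1) = \binom{\binom{n}{2}}{2} - \binom{n}{3}$. In this case $|R_G| + |B_G| + |F_M| = \binom{n}{2}$, $|T_{S(M)}| = \binom{n}{3}$, $|\mathcal{P}_G| = 0$, and $|\mathcal{O}_M| = 0$ because any four vertices induce $K_4$ in $\Lambda$ and hence have chromatic number at least $4$; the claimed formula matches.

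Otherwise pick $x, y$ non-adjacent in $S(M)$; since $\mathcal{P}_G = \emptyset$, these are not the ends of a bichromatic $2$-path, so Equation (\ref{math:reduce}) applies and
\[
[\lambda^{n-2}] P(M, \lambda) = [\lambda^{n-2}] P(M + xy, \lambda) + [\lambda^{n-2}] P(M_{xy}, \lambda).
\]
The first summand is the claimed formula evaluated at $M + xy$ (valid by maximality of $|E(S(M))|$), and the second is the second-highest coefficient of a chromatic polynomial on $n - 1$ vertices, supplied by Theorem \ref{thm:secondcoeff} applied to $M_{xy}$. The remaining work is bookkeeping: adding $xy$ to $F$ increments $|F_M|$ by $1$, leaves $|R_G|$, $|B_G|$, $|\mathcal{P}_G|$ unchanged, creates $|N_{S(M)}(x) \cap N_{S(M)}(y)|$ new triangles in $S(M+xy)$, and destroys exactly those obstructing pairs whose four vertices contain both $x$ and $y$; contracting $xy$ to a new vertex $z$ merges each common neighbour's incident edges according to the rule for deleting $F$-edges parallel to coloured ones, converts each pair of opposite-colour edges at $x$ and $y$ into either a new bichromatic $2$-path or a new obstructing pair in $M_{xy}$, and turns each triangle of $S(M+xy)$ through $xy$ into an edge of $S(M_{xy})$. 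Summing the contributions should collapse to $\binom{|R_G|+|B_G|+|\mathcal{P}_G|+|F_M|}{2} - |T_{S(M)}| - |\mathcal{P}_G| - |\mathcal{O}_M|$, contradicting the choice of $M$.

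The principal challenge is that $|\mathcal{O}_M|$ is defined through $\Lambda$, which depends on the full bichromatic $2$-path structure of $M$, so both operations can alter $\Lambda$ in subtle ways; verifying that each new or destroyed obstructing pair is counted exactly once across $M+xy$ and $M_{xy}$ is the core technical step. The degenerate subcase $u = v$ of condition (2), in which an obstructing pair collapses to a bichromatic $2$-path already tracked by $\mathcal{P}_G$, is precisely why $|\mathcal{P}_G|$ must be subtracted as its own correction term in the claimed formula.
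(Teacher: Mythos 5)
Your skeleton is the same as the paper's: minimal counterexample on $n$ vertices maximizing $|E(S(M))|$, reduction to $\mathcal{P}_G=\emptyset$ by adding the chord of any induced bichromatic $2$-path to $F$, the complete-graph base case via the known coefficients of $P(K_n,\lambda)$, and then Equation~(\ref{math:reduce}) applied to a non-adjacent pair together with Theorem~\ref{thm:secondcoeff} for the contracted graph. The base case, the identity $|F_{M+xy}|=|F_M|+1$, the triangle count $|T_{S(M+xy)}|=|T_{S(M)}|+|C|$ for $C$ the common neighbourhood, and the edge count for $M_{xy}$ are all fine.

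The gap is that the step on which the theorem actually turns is only described as bookkeeping that ``should collapse,'' and the one concrete claim you make about it is false. Adding $xy$ to $F$ does \emph{not} destroy exactly the obstructing pairs whose four vertices contain both $x$ and $y$: if $xa\in R$ and $yb\in B$ and there are no other adjacencies among $\{x,a,y,b\}$ in $\Lambda$, then inserting $xy$ turns the $2K_2$ into a path on four vertices, which is still bipartite, so the pair is still obstructing in $M+xy$. The correct characterization (which uses $\mathcal{P}_G=\emptyset$) is that a pair $(xa,yb)$ with $xa\in R$, $yb\in B$ ceases to be obstructing if and only if $ab\notin F$ and at least one of $xb$, $ya$ lies in $F$, i.e.\ precisely when inserting $xy$ completes an odd cycle in $\Lambda[x,a,y,b]$. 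Writing $\mathcal{O}^{xy}_M$ for this set, the paper then shows $|\mathcal{O}_{M+xy}|=|\mathcal{O}_M|-|\mathcal{O}^{xy}_M|$ and $|\mathcal{P}_{G_{xy}}|=|\mathcal{P}_G|+|\mathcal{O}^{xy}_M|$, so that the loss of obstructing pairs in $M+xy$ is exactly offset by the new induced bichromatic $2$-paths through the identified vertex in $M_{xy}$, and the term $|\mathcal{O}^{xy}_M|$ cancels from the sum. Without pinning down which pairs are destroyed and matching them against the new $2$-paths of $G_{xy}$, the induction cannot be closed; this is the content of the proof, not a routine verification. (Your closing remark about the ``degenerate subcase $u=v$'' is also off: by construction $\Lambda$ contains the chord of every induced bichromatic $2$-path, so such a configuration has $\chi(\Lambda[u,x,v,y])=3$ and is never an obstructing pair; the $-|\mathcal{P}_G|$ term is a separate correction, not a collapsed instance of $\mathcal{O}_M$.)
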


\begin{proof}
	We proceed considering the existence of a counterexample.
	Let $n$ be the least integer so that there exists a mixed $2$-edge-coloured graph with $n$ vertices so that the statement of the theorem is false.
	Among all counterexamples on $n$ vertices, let $M = (G,F)$ be a counterexample that maximizes the number of edges in $S(M)$.

	If $\mathcal{P}_G \neq \emptyset$, then there exists vertices $u$ and $v$ which are the ends of an induced bichromatic path in $M$.
	Let $M^\prime$ be the mixed $2$-edge-coloured graph formed from $M$ by adding $uv$ to $F$.
	We observe that every colouring of $M$ is a colouring of $M^\prime$ and also that every colouring of $M^\prime$ is a colouring of $M$.
	Thus $M$ and $M^\prime$ have the same chromatic polynomial.
	This contradicts our choice of $M$.
	Thus we may assume $\mathcal{P}_G = \emptyset$.
	
	We claim there exists a pair of vertices $u$ and $v$ so that $u$ and $v$ are not adjacent in $S(M)$.
	If $u$ and $v$ do not exist, then $S(M) \cong K_n$ and so  $P(M,\lambda) = P(K_n,\lambda)$.
	The coefficient of $\lambda^{n-2}$ in $P(K_n,\lambda)$ is given by$ {{n \choose 2} \choose 2} - {n \choose 3}$ \cite{R68}.
	For $M$ we observe
	\begin{align*}
	{{n \choose 2} \choose 2} - {{n \choose 3}} &= 	{|R_G| + |B_G| + |\mathcal{P}_G| + |F_M|\choose 2} - |T_{S(M)}|  \\
	&= 	{|R_G| + |B_G| + |\mathcal{P}_G| + |F_M|\choose 2} - |T_{S(M)}| - |\mathcal{P}_G| - |\mathcal{O}_M|
	\end{align*}

This last equality follows by observing  $\mathcal{P}_G=\emptyset$ (by hypothesis) and $\mathcal{O}_M = \emptyset$ when $M$ is complete.
This equality contradicts our choice of $M$, and so we conclude that such vertices $u$ and $v$ exist.

	Thus we consider $u,v \in V(M)$ so that $u$ and $v$ are not adjacent in $S(M)$.
	Let $c_{n-2}$ be the coefficient of $\lambda^{n-2}$ in $P(M,\lambda)$.
	By Theorem \ref{thm:secondcoeff}, our choice of $M$ and Equation (\ref{math:reduce})  we have 
	\begin{align*}
c_{n-2} &={|R_{G}| + |B_{G}| +	|\mathcal{P}_{G}| + |F_{M+uv}| \choose 2} 
- \left(|T_{S(M+uv)}| + |\mathcal{P}_{G}| + |\mathcal{O}_{M+uv}|\right)\\
&-\left(|R_{G_{uv}}| + |B_{G_{uv}}| + |F_{M_{uv}}| + |\mathcal{P}_{G_{uv}}| \right)
	\end{align*}
	
Observe  $|F_{M+uv}| = |F_{M}| + 1$.
Let $C$ be the set of common neighbours of $u$ and $v$ in $S(M)$.
We observe  $|T_{S(M+uv)}| = |T_{S(M)}| + |C|$ and $|R_{G_{uv}}| + |B_{G_{uv}}| + |F_{M_{uv}}| = |R_{G}| + |B_{G}| + |F_{M}| - |C|$.
Thus 

	\begin{align*}
c_{n-2} &={|R_{G}| + |B_{G}| +	|\mathcal{P}_{G}| + |F_{M}| + 1 \choose 2} 
- \left(|T_{S(M)}| + |\mathcal{P}_{G}| + |\mathcal{O}_{M+uv}|\right)\\
&-\left(|R_{G}| + |B_{G}| + |F_{M}| + |\mathcal{P}_{G_{uv}}| \right)
\end{align*}
	
Since $\mathcal{P}_{G} = \emptyset$, a pair of obstructing edges, $\left(ux, vy\right)$ in $M$, is not obstructing in $M+uv$ if and only if $xy \notin F$ and  one of $uy$ or $vx$ is contained in $F$.
Let $\mathcal{O}^{uv}_{M}$ be the set of such obstructing edges.
Therefore $|\mathcal{O}_{M+uv}| = |\mathcal{O}_{M}| - |\mathcal{O}^{uv}_{M}|$.
Notice now that every element of $\mathcal{O}^{uv}_{M}$ contributes an element of $\mathcal{P}_{G_{uv}}$ that was not an element of $\mathcal{P}_{G}$. Thus $|\mathcal{P}_{G_uv}| =  |\mathcal{P}_{G}| +|\mathcal{O}^{uv}_{M}|$.
	
Substituting yields

\begin{align*}
c_{n-2} &={|R_{G}| + |B_{G}| +	|\mathcal{P}_{G}| + |F_{M}| + 1 \choose 2} 
- \left(|T_{S(M)}| + |\mathcal{P}_{G}| + \mathcal{O}_{M} - |\mathcal{O}^{uv}_{M}|\right)\\
&-\left(|R_{G}| + |B_{G}| + |F_{M}| + |\mathcal{P}_{G}| +|\mathcal{O}^{uv}_{M}| \right)\\
& ={|R_{G}| + |B_{G}| +	|\mathcal{P}_{G}| + |F_{M}| + 1 \choose 2}  -\left(|R_{G}| + |B_{G}| + |F_{M}| + |\mathcal{P}_{G}| \right) - \left(|T_{S(M)}| + |\mathcal{P}_{G}| + |\mathcal{O}_{M}|\right)\\
& ={|R_{G}| + |B_{G}| +	|\mathcal{P}_{G}| + |F_{M}|\choose 2}  - \left(|T_{S(M)}| + |\mathcal{P}_{G}| + |\mathcal{O}_{M}|\right)
\end{align*}
	
Thus $M$ is not a counterexample.
And so by choice of $M$, no counterexample exists.
\end{proof}

\begin{corollary}
	For a $2$-edge-coloured graph $G=\left(\Gamma,R,B\right)$, the coefficient of $\lambda^{n-2}$ in $P(G,\lambda)$ is given by
	\[
		{|R_G| + |B_G| + |\mathcal{P}_G|\choose 2} 
- |T_{\Gamma}| - |\mathcal{P}_G| - |\mathcal{O}_G|
\]
\end{corollary}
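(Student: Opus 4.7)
The plan is to derive the corollary directly from Theorem \ref{thm:thirdcoeff} by regarding the $2$-edge-coloured graph $G$ as the mixed $2$-edge-coloured graph $M = (G, \emptyset)$, that is, with $F_M = \emptyset$. Under this identification the defining conditions for a $k$-colouring of $M$ collapse exactly to those for a $k$-colouring of $G$, so $P(G,\lambda) = P(M,\lambda)$ and in particular the coefficients of $\lambda^{n-2}$ agree.

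Next I would trace through the three quantities appearing in the formula of Theorem \ref{thm:thirdcoeff} and check what each becomes when $F_M = \emptyset$. First, substituting $|F_M| = 0$ into the binomial coefficient yields $\binom{|R_G| + |B_G| + |\mathcal{P}_G|}{2}$, which matches the desired expression. Second, since the edge set of $S(M)$ is $R_G \cup B_G \cup F_M = R_G \cup B_G = E(\Gamma)$, the graph $S(M)$ coincides with $\Gamma$, and hence $|T_{S(M)}| = |T_\Gamma|$. Third, the set $\mathcal{O}_M$ depends only on $R_G$, $B_G$, and the auxiliary graph $\Lambda$ obtained by augmenting $M$ with chords across induced bichromatic $2$-paths of $G$; when $F_M = \emptyset$ this construction uses only data internal to $G$, so $\mathcal{O}_M = \mathcal{O}_G$, and $|\mathcal{P}_G|$ is of course unchanged.

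There is no real obstacle here; the corollary is an immediate specialization of Theorem \ref{thm:thirdcoeff} once $F_M = \emptyset$ is substituted and the identifications $S((G,\emptyset)) = \Gamma$ and $\mathcal{O}_{(G,\emptyset)} = \mathcal{O}_G$ are recorded. The only step worth writing out explicitly in the proof is the observation that a $k$-colouring of $(G,\emptyset)$ is exactly a $k$-colouring of $G$, so that applying Theorem \ref{thm:thirdcoeff} to $(G,\emptyset)$ is legitimate.
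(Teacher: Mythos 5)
Your proposal is correct and matches the paper's intent exactly: the corollary is stated without proof precisely because it is the specialization of Theorem \ref{thm:thirdcoeff} to $M=(G,\emptyset)$, under which $|F_M|=0$, $S(M)=\Gamma$, and $\mathcal{O}_M=\mathcal{O}_G$, just as you record. Nothing further is needed.
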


The results in Theorems \ref{thm:secondcoeff} and \ref{thm:thirdcoeff} relied on known results about the second and third coefficients of the chromatic polynomial of a complete graph.
Such results can be proved independently for mixed $2$-edge-coloured complete graphs, forgoing the need to reference prior work on the chromatic polynomial of a graph.
In this case we notice that the results for the second and third coefficients of the chromatic polynomial may be obtained as corollaries of Theorems \ref{thm:secondcoeff} and \ref{thm:thirdcoeff}  by considering mixed $2$-edge-coloured graphs with $R = B = \emptyset$.
We  contextualize our results within the current state-of-the-art in Section \ref{sec:discuss}.

\section{Chromatically Invariant $2$-edge-coloured Graphs}\label{sec:chromInvar}

Consider a $2$-edge-coloured graph $G = \left(\Gamma,R,B\right)$.
By definition every colouring of $G$ is necessarily a colouring of $\Gamma$.
Thus for each integer $k \geq 1$ we have $P(\Gamma,k) \geq P(G,k)$.
In this section we study the structure of $2$-edge-coloured graphs $G$ for which $P(\Gamma,\lambda) = P(G,\lambda)$.
We refer to such $2$-edge-coloured graphs as \emph{chromatically invariant}.
Trivially, every $2$-edge-coloured graph in which $R = \emptyset$ (or $B = \emptyset$) is chromatically invariant.
We refer to those chromatically invariant $2$-edge-coloured graphs with  $R,B \neq \emptyset$ \emph{non-trivially chromatically invariant}.

\begin{lemma}\label{lem:doubleEmpty} 
	Let $G$ be a $2$-edge-coloured graph. If $\mathcal{P}_G = \mathcal{O}_G = \emptyset$, then $G$ is chromatically invariant.
\end{lemma}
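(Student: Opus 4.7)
The plan is to prove the stronger pointwise statement that every proper colouring of $\Gamma$ is automatically a colouring of $G$. Since the reverse containment is immediate from the definitions, this gives $P(G,k) = P(\Gamma,k)$ for every non-negative integer $k$ and hence $P(G,\lambda) = P(\Gamma,\lambda)$, which is chromatic invariance.

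Fix $k \geq 1$ and let $c$ be a proper $k$-colouring of $\Gamma$. Condition (1) of a $k$-colouring of $G$ is immediate. I would verify condition (2) by contradiction: suppose there exist $ux \in R$ and $vy \in B$ with $c(u) = c(v)$ and $c(x) = c(y)$. The heart of the argument is a case split according to coincidences among $u, x, v, y$.

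First I would rule out the degenerate overlaps. If $u = y$ (respectively $v = x$) then $c(u) = c(y) = c(x)$ contradicts $ux \in E(\Gamma)$ (respectively $c(v) = c(x) = c(y)$ contradicts $vy \in E(\Gamma)$). Simultaneous $u = v$ and $x = y$ would force $ux \in R \cap B = \emptyset$, which is impossible. The remaining possibilities are: $u = v$ with $x \neq y$; $x = y$ with $u \neq v$; or $u, x, v, y$ all distinct. In the first subcase, properness of $c$ together with $c(x) = c(y)$ gives $xy \notin E(\Gamma)$, so $xuy$ is an induced path whose two edges are $xu \in R$ and $uy \in B$, i.e.\ an element of $\mathcal{P}_G$. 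The second subcase is symmetric, producing the induced bichromatic $2$-path $uxv$. Both contradict $\mathcal{P}_G = \emptyset$.

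The final case, in which $u, x, v, y$ are distinct, is where the obstructing-edges hypothesis enters. Since $\mathcal{P}_G = \emptyset$, no edges are introduced in passing from $G$ to $\Lambda$, so $\Lambda[u,x,v,y] = \Gamma[u,x,v,y]$. The hypotheses $c(u) = c(v)$ and $c(x) = c(y)$, combined with properness of $c$ on $\Gamma$, yield $uv \notin E(\Gamma)$ and $xy \notin E(\Gamma)$, so $\{u,v\} \mid \{x,y\}$ is a proper $2$-colouring of $\Gamma[u,x,v,y]$; since $ux$ is an edge we get $\chi(\Lambda[u,x,v,y]) = 2$, placing $(ux,vy)$ in $\mathcal{O}_G$ and contradicting $\mathcal{O}_G = \emptyset$. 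I expect the only real obstacle to be keeping the case analysis over vertex coincidences tidy; once those cases are separated, each reduces directly to one of the two exhaustion hypotheses $\mathcal{P}_G = \emptyset$ or $\mathcal{O}_G = \emptyset$.
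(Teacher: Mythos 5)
Your proposal is correct and follows essentially the same route as the paper's proof: show that every proper colouring of $\Gamma$ satisfies condition (2), using $\mathcal{P}_G = \emptyset$ when the red and blue edges share an endpoint and $\mathcal{O}_G = \emptyset$ when the four vertices are distinct. Your treatment is slightly more meticulous about the degenerate coincidences ($u=y$, $v=x$, $x=y$) and unfolds the definition of obstructing edges via $\Lambda$ explicitly, but this is a refinement of the same argument rather than a different one.
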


\begin{proof}
	Let $G=\left(\Gamma,R,B\right)$ be a $2$-edge-coloured graph so that $\mathcal{P}_G = \mathcal{O}_G = \emptyset$.
	For each $k \geq 1$, let $C_{G,k}$ be the set of $k$-colourings of $G$.
	Similarly, let $C_{\Gamma,k}$ be the set of $k$-colourings of $\Gamma$.
	Recalling the definition of $k$-colouring of a $2$-edge-coloured graph, it follows directly that $C_{G,k} \subseteq C_{\Gamma,k}$.
	To complete the proof it suffices to show  $C_{\Gamma,k} \subseteq C_{G,k}$.
	
	Let $c$ be a $k$-colouring of $\Gamma$.
	Consider $ux\in R$ and $vy \in B$ so that $c(u) = c(v)$.
	If $u = v$, then $xy \in E(\Gamma)$ as $\mathcal{P}_G = \emptyset$.
	Thus $c(x) \neq c(y)$.
	
	Consider now the case where $u \neq v$.
	Since $\mathcal{O}_G = \emptyset$ it follows that $\left|\{c(u),c(x),c(v), c(y)\}\right| \in \{3,4\}$.
	Thus $c(x) \neq c(y)$.
	Therefore $c \in C_{G,k}$ and so it follows $C_{\Gamma,k} \subseteq C_{G,k}$.
\end{proof}

\begin{theorem}\label{thm:firstCharact}
	A $2$-edge-coloured graph $G = \left(\Gamma ,R , B\right)$ is chromatically invariant if and only if $G$ has no induced bichromatic $2$-path and $G$ contains no induced bichromatic copy of $2K_2$.
\end{theorem}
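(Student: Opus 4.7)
The plan is to prove both directions of the biconditional. The ($\Leftarrow$) direction is already established by Lemma~\ref{lem:doubleEmpty}: when both $\mathcal{P}_G$ and $\mathcal{O}_G$ are empty, every $k$-colouring of $\Gamma$ is a $k$-colouring of $G$, hence the two polynomials agree termwise in their evaluations at positive integers and so coincide as polynomials.

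For the ($\Rightarrow$) direction I would use the coefficient formulas already proved in the previous section, rather than a direct combinatorial argument. Suppose $P(G,\lambda) = P(\Gamma,\lambda)$; equal polynomials have equal coefficients. View $\Gamma$ as the mixed $2$-edge-coloured graph $M = ((\Gamma,\emptyset,\emptyset), E(\Gamma))$, so that by Theorem~\ref{thm:secondcoeff} the coefficient of $\lambda^{n-1}$ in $P(\Gamma,\lambda)$ is $-|E(\Gamma)| = -(|R_G|+|B_G|)$. The corollary to Theorem~\ref{thm:secondcoeff} says the coefficient of $\lambda^{n-1}$ in $P(G,\lambda)$ is $-(|R_G|+|B_G|+|\mathcal{P}_G|)$. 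Equating these forces $|\mathcal{P}_G|=0$, so $\mathcal{P}_G=\emptyset$.

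Next, compare the coefficient of $\lambda^{n-2}$. By Theorem~\ref{thm:thirdcoeff} applied to $M$ (where $\mathcal{O}_M=\emptyset$ trivially, there being no coloured edges), the coefficient of $\lambda^{n-2}$ in $P(\Gamma,\lambda)$ is $\binom{|E(\Gamma)|}{2} - |T_\Gamma|$. By the corollary to Theorem~\ref{thm:thirdcoeff} and the fact that $\mathcal{P}_G=\emptyset$ from the previous step, the coefficient of $\lambda^{n-2}$ in $P(G,\lambda)$ is
\[
\binom{|R_G|+|B_G|}{2} - |T_\Gamma| - |\mathcal{O}_G|.
\]
Since $|R_G|+|B_G| = |E(\Gamma)|$, equating the two expressions forces $|\mathcal{O}_G|=0$, so $\mathcal{O}_G=\emptyset$, as required.

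The proof is essentially an accounting argument, so there is no single hard step; the real work was done in establishing Theorems~\ref{thm:secondcoeff} and~\ref{thm:thirdcoeff}. The only point worth flagging is the choice to extract the structural invariants $\mathcal{P}_G$ and $\mathcal{O}_G$ from matching coefficients in a prescribed order: the $\lambda^{n-1}$ comparison must come first, because the closed form for the $\lambda^{n-2}$ coefficient involves $|\mathcal{P}_G|$ nonlinearly inside a binomial, and we need $\mathcal{P}_G=\emptyset$ to isolate $|\mathcal{O}_G|$ cleanly.
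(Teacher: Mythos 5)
Your forward direction is valid and takes a genuinely different route from the paper's: instead of exhibiting an explicit $k$-colouring of $\Gamma$ that is not a colouring of $G$, you read off $|\mathcal{P}_G|=0$ and then $|\mathcal{O}_G|=0$ by comparing the $\lambda^{n-1}$ and $\lambda^{n-2}$ coefficients via Theorems \ref{thm:secondcoeff} and \ref{thm:thirdcoeff}, and your remark about the order of the two comparisons is apt. You should, however, add the one-line translation back to the statement being proved: once $\mathcal{P}_G=\emptyset$, the graph $\Lambda$ adds no edges, so any induced bichromatic copy of $2K_2$ on $\{u,x,v,y\}$ induces exactly a $2K_2$ in $\Lambda$, which is bipartite; hence such a copy would be a pair of obstructing edges, and $\mathcal{O}_G=\emptyset$ therefore does rule it out.

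The backward direction has a genuine gap. You invoke Lemma \ref{lem:doubleEmpty}, whose hypothesis is $\mathcal{P}_G=\mathcal{O}_G=\emptyset$, but the theorem's hypothesis only says that $G$ has no induced bichromatic $2$-path and no induced bichromatic copy of $2K_2$, and the latter condition is a priori weaker than $\mathcal{O}_G=\emptyset$. A pair $ux\in R$, $vy\in B$ is obstructing whenever $\Lambda[u,x,v,y]$ is bipartite, which covers configurations that are not induced copies of $2K_2$: for instance $uv\in E(\Gamma)$ with no other edges among the four vertices (an induced path), or a $4$-cycle on $\{u,x,v,y\}$. So you still owe the implication that no induced bichromatic $2K_2$ together with $\mathcal{P}_G=\emptyset$ forces $\mathcal{O}_G=\emptyset$; this is precisely the second half of the paper's proof. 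The argument there is: for disjoint $ux\in R$ and $vy\in B$, the absence of an induced bichromatic $2K_2$ yields an edge between $\{u,x\}$ and $\{v,y\}$, say $uv$; whatever colour $uv$ carries, it forms a bichromatic $2$-path with one of $ux$ or $vy$, so $\mathcal{P}_G=\emptyset$ forces the third edge of a triangle on three of the four vertices, whence $\chi\left(\Lambda[u,x,v,y]\right)\geq 3$ and the pair is not obstructing. Insert that argument and your proof is complete.
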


\begin{proof}
	Let $G=\left(\Gamma,R,B\right)$ be a $2$-edge-coloured graph.
	
	Begin by assuming $G$ is chromatically invariant.
	For a contradiction, we first assume $\mathcal{P} \neq \emptyset$.
	Consider $uvw \in \mathcal{P}$.
	Let $k$ be the least integer so that there is a $k$-colouring $c$ of $\Gamma$ for which $c(u) = c(v)$.
	Let $C_{\Gamma,k}$ be the set of $k$-colourings of $\Gamma$.
	Let $C_{G,k}$ be the set of $k$-colourings of $G$.
	By construction, $c \in C_{\Gamma,k}$ but $c \notin C_{G,k}$.
	By the argument in the proof of Lemma \ref{lem:doubleEmpty}, we have $C_{G,k} \subseteq C_{\Gamma,k}$.
	Therefore $|C_{G,k}| < |C_{\Gamma,k}|$.
	Thus $P(G,k) < P(\Gamma, k)$, which implies $P(G,\lambda) \neq P(\Gamma, \lambda)$.
	
	Assume now $G$ contains an induced bichromatic copy of $2K_2$.
	Let $ux \in R$ and $vy \in B$ so that $G[u,x,v,y]$ is a bichromatic copy of $2K_2$.
	Let $k$ be the least integer so that there is a $k$ colouring $c$ of $\Gamma$ for which $c(u) = c(v)$ and $c(x) = c(y)$.
	A contradiction follows as in the previous paragraph.

	Assume $\mathcal{P} = \emptyset$ and $G$ contains no induced bichromatic copy of $2K_2$.
	By Lemma \ref{lem:doubleEmpty} it suffices to show $\mathcal{O}_G = \emptyset$.
	Consider $ux \in R$ and $vy \in B$ so that $u \neq v,y$ and $x \neq v,y$.
	Since $G$ contains no induced bichromatic copy of $2K_2$, there exists an edge with an end in $\{u,x\}$ and an end in $\{v,y\}$.
	Without loss of generality, assume $uv \in R$.
	Since $\mathcal{P} = \emptyset$ and $vy \in B$ it follows that $uy \in E(\Gamma)$.
	Therefore $\Gamma[u,v,x,y]$ contains a copy of $K_3$.
	Thus $\left(ux,vy\right)$ is not a pair of obstructing edges.
	Therefore $\mathcal{O}_G = \emptyset$.
	The result follows by Lemma \ref{lem:doubleEmpty}.	
\end{proof}

Theorem \ref{thm:firstCharact} gives a full characterization of chromatically invariant $2$-edge-coloured graphs. 
This characterization allows us to further characterize these $2$-edge-coloured graphs by way of pairs of independent sets.

\begin{theorem}\label{thm:I1I2}
	A $2$-edge-coloured graph $G=\left(\Gamma,R,B\right)$ is chromatically invariant if and only if for every disjoint pair of non-empty independent sets $I_1$ and $I_2$ in $\Gamma$, the $2$-edge-coloured subgraph induced by $I_1$ and $I_2$ is monochromatic.
\end{theorem}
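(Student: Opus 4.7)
The plan is to use Theorem \ref{thm:firstCharact} as the bridge: it characterizes chromatic invariance by the absence of induced bichromatic $2$-paths and induced bichromatic copies of $2K_2$, so it suffices to show that this forbidden-configuration condition is equivalent to the independent-set condition.

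For the forward direction, assume $G$ is chromatically invariant, and suppose for contradiction that there are disjoint non-empty independent sets $I_1, I_2$ such that $G[I_1 \cup I_2]$ contains both a red edge $u_1v_1$ and a blue edge $u_2v_2$ (with the $u_i$'s in $I_1$ and the $v_i$'s in $I_2$; note that all edges in $G[I_1 \cup I_2]$ are between $I_1$ and $I_2$ since $I_1, I_2$ are independent). I would split into cases depending on whether these two edges share a vertex. If $u_1 = u_2$, then $v_1 u_1 v_2$ is a $2$-path that is induced because $v_1 v_2 \notin E(\Gamma)$ (both lie in the independent set $I_2$); the case $v_1 = v_2$ is symmetric, and each yields a bichromatic $2$-path, contradicting Theorem \ref{thm:firstCharact}. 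If the edges are vertex-disjoint, I would examine the two possible ``cross'' pairs $u_1v_2$ and $u_2 v_1$: if either is an edge of $\Gamma$, it has some colour, and combining it with whichever of $u_1v_1$, $u_2v_2$ shares a vertex produces a $2$-path of opposite colours whose endpoints lie in a common independent set (and hence are non-adjacent), giving an induced bichromatic $2$-path; if neither cross pair is an edge, then $u_1v_1$ and $u_2v_2$ induce a bichromatic copy of $2K_2$, since $u_1u_2$ and $v_1v_2$ are non-edges by the independence of $I_1$ and $I_2$. Either case contradicts Theorem \ref{thm:firstCharact}.

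For the reverse direction, I would argue the contrapositive directly from Theorem \ref{thm:firstCharact}. If $G$ contains an induced bichromatic $2$-path $xuy$ with $xu \in R$ and $uy \in B$, set $I_1 = \{u\}$ and $I_2 = \{x,y\}$; these are non-empty disjoint independent sets (the latter because $xy \notin E(\Gamma)$), and the edges between them are of both colours. If $G$ contains an induced bichromatic $2K_2$ on vertices $u,x,v,y$ with $ux \in R$ and $vy \in B$, set $I_1 = \{u,v\}$ and $I_2 = \{x,y\}$; the non-edges forced by the $2K_2$ structure make these independent and disjoint, and again both colours appear between them. In both cases the hypothesis of the theorem fails.

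I expect the main obstacle to be the vertex-disjoint sub-case of the forward direction: I have to check carefully that the presence of any ``chord'' between $\{u_1,v_1\}$ and $\{u_2,v_2\}$ still produces an \emph{induced} bichromatic $2$-path (which is where independence of $I_1$ and $I_2$ does the real work, supplying the required non-edges), and that if no chord exists then the four vertices genuinely form an induced bichromatic $2K_2$ rather than something larger. Once the case analysis is laid out cleanly, the rest is essentially a translation between the two characterizations.
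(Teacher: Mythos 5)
Your proposal is correct and follows essentially the same route as the paper: both directions are reduced to Theorem \ref{thm:firstCharact}, with the forward direction casing on whether the red and blue edges share an endpoint (using independence of $I_1,I_2$ to supply the non-edges that make the resulting $2$-path or $2K_2$ induced), and the reverse direction extracting the independent sets directly from a forbidden configuration. Your write-up is in fact slightly cleaner than the paper's, which phrases the shared-endpoint analysis as two successive contradictions and leaves the reverse direction's choice of $I_1,I_2$ implicit.
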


\begin{proof}
	Let $G=\left(\Gamma,R,B\right)$ be a $2$-edge-coloured graph and let $I_1$ and $I_2$ be disjoint non-empty independent sets of $\Gamma$.
	
	Assume $G$ is chromatically invariant.
	Thus by Theorem \ref{thm:firstCharact}, it follows that $G$ has no induced bichromatic $2$-path and no induced bichromatic copy of $2K_2$.
	If $G[I_1 \cup I_2]$ has at most one edge, then the result holds --  necessarily this subgraph is monochromatic.
	Otherwise, assume $e = u_1u_2$ and $f = v_1v_2$ are edges of $G[I_1 \cup I_2]$ ($u_1,v_1 \in I_1$ and $u_2,v_2 \in I_2$).
	Assume, without loss of generality, that $e \in R$ and $f \in B$.

	We first show that $e$ and $f$ have a common end point.
	Recall $G$ contains no induced bichromatic copy of $2K_2$
	Thus if $e$ and $f$ do not have a common end point, then, without loss of generality, we have $u_1v_2 \in E(\Gamma)$.
	If $u_1v_2 \in R$ then $u_1v_2v_1 \in \mathcal{P}$.
	Similarly, if $u_1v_2 \in B$, then $v_2u_1v_1 \in \mathcal{P}$.
	However, we have $\mathcal{P} = \emptyset$.
	Therefore $e$ and $f$ have a common end point.
	
	If $e$ and $f$ have a common endpoint, then $ef \in \mathcal{P}$.
	However, we have $\mathcal{P} = \emptyset$.
	Therefore $e$ and $f$ do not have a common end point, a contradiction.
	Therefore the $2$-edge-coloured subgraph induced by $I_1$ and $I_2$ is monochromatic.
	
	Assume $G$ is not chromatically invariant.
	By Theorem \ref{thm:firstCharact} we have that $P \neq \emptyset$ or $G$ contains an induced bichromatic copy of $2K_2$.
	In either case we find a pair of disjoint independent sets $I_1,I_2$ so that $G[I_1 \cup I_2]$ is not monochromatic.	
\end{proof}

\begin{corollary}\label{cor:subgraphs}
	If $G$ is a $2$-edge-coloured chromatically invariant graph, then every induced subgraph of $G$ is chromatically invariant.
\end{corollary}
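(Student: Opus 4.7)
The plan is to invoke either Theorem~\ref{thm:firstCharact} or Theorem~\ref{thm:I1I2}, both of which characterize chromatic invariance via conditions on induced substructures; such conditions are manifestly hereditary, so the corollary should fall out almost immediately.

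Concretely, I would argue as follows. Let $G = (\Gamma,R,B)$ be chromatically invariant, and let $H = (\Gamma[S], R\cap E(\Gamma[S]), B \cap E(\Gamma[S]))$ be the $2$-edge-coloured graph induced by a vertex subset $S \subseteq V(\Gamma)$. By Theorem~\ref{thm:firstCharact}, $G$ contains no induced bichromatic $2$-path and no induced bichromatic copy of $2K_2$. Any induced bichromatic $2$-path or induced bichromatic $2K_2$ appearing in $H$ would, since $H$ is an induced sub-object of $G$, also appear as an induced bichromatic $2$-path or induced bichromatic $2K_2$ in $G$. Therefore $H$ contains neither, and a second application of Theorem~\ref{thm:firstCharact} (in the other direction) yields that $H$ is chromatically invariant.

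Alternatively, one could phrase the proof through Theorem~\ref{thm:I1I2}: any pair of disjoint non-empty independent sets $I_1, I_2$ of $\Gamma[S]$ is also a pair of disjoint non-empty independent sets of $\Gamma$, and the subgraph of $H$ induced by $I_1 \cup I_2$ coincides with the subgraph of $G$ induced by $I_1 \cup I_2$, which is monochromatic by hypothesis; applying Theorem~\ref{thm:I1I2} to $H$ then gives chromatic invariance.

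There is no real obstacle here; the content of the corollary is simply that the characterizing obstructions from Theorem~\ref{thm:firstCharact} (induced bichromatic $2$-paths and induced bichromatic $2K_2$'s) form a hereditary family. The only point requiring a small amount of care is checking that ``induced'' is being used consistently between $G$ and the sub-object $H$ — i.e.\ that an induced copy of $2K_2$ or $P_3$ inside $H$ really is induced inside $G$ — which is immediate from the definition of induced sub-$2$-edge-coloured graph.
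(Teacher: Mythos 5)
Your proposal is correct and matches the paper's intended derivation: the paper states this as an immediate corollary of the characterization theorems (it gives no separate proof), and your observation that induced bichromatic $2$-paths and induced bichromatic copies of $2K_2$ in an induced sub-$2$-edge-coloured graph are also induced in the ambient graph is exactly the hereditary argument needed. Both of your phrasings (via Theorem~\ref{thm:firstCharact} or via Theorem~\ref{thm:I1I2}) are valid and equivalent in substance.
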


We turn now to the problem of classifying those graphs which admit a chromatically invariant $2$-edge-coloured colouring.
We focus our attention on those graphs which admit a chromatically invariant $2$-edge-colouring in which every vertex is incident with both a red and blue edge.

Recall that a graph $\Gamma$ is a \emph{join} when  $V(\Gamma)$ has a partition $\{X,Y\}$ so that $xy \in E(\Gamma)$ for all $x\in X$ and $y \in Y$.
For $\Gamma_1 = \Gamma[X]$ and $\Gamma_2 = \Gamma[Y]$ we say that \emph{$\Gamma$ is the join of $\Gamma_1$ and $\Gamma_2$} and we write $\Gamma = \Gamma_1 \vee \Gamma_2$.
We call an edge $uv \in V(\Gamma_1 \vee \Gamma_2)$ a \emph{joining edge} when $u \in V(\Gamma_1)$ and $v \in V(\Gamma_2)$.
Notice that if $\Gamma$ is a join, then $V(\Gamma)$ admits a partition: $\{X_1,X_2,\dots, X_k\}$ such that $\Gamma[X_i]$ is not a join for each $1 \leq i \leq k$ and for each $1 \leq i < j \leq k$ we have $\Gamma[X_i \cup X_j] = \Gamma[X_i] \vee \Gamma[X_j]$.
We denote such a decomposition as $\Gamma= \bigvee_{1 \leq i \leq k} \Gamma[X_i]$.

\begin{lemma}\label{lem:joinsWork}
	Let $\Gamma_1$ and $\Gamma_2$ be graphs such that each of $\Gamma_1$ and $\Gamma_2$  have no isolated vertices.
	The graph $\Gamma_1 \vee \Gamma_2$ admits a non-trivial  chromatically invariant $2$-edge colouring in which every vertex is incident with at least one red edge and one blue edge.
\end{lemma}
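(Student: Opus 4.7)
The plan is to exhibit a specific $2$-edge-colouring of $\Gamma_1 \vee \Gamma_2$ and verify chromatic invariance via Theorem \ref{thm:firstCharact}. Specifically, I would set $R$ to be the internal edges (those lying in $\Gamma_1$ or in $\Gamma_2$) and $B$ to be the joining edges. Since neither $\Gamma_1$ nor $\Gamma_2$ has an isolated vertex, each has at least one edge, so $R \neq \emptyset$; and since $\Gamma_1,\Gamma_2$ are non-empty, $B \neq \emptyset$. Every vertex $v$ of $\Gamma_i$ is incident with some red edge (by the no-isolated-vertex hypothesis applied to $\Gamma_i$) and with every vertex of $\Gamma_{3-i}$ via a blue joining edge, so each vertex sees both colours.

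To apply Theorem \ref{thm:firstCharact}, I need to verify there is no induced bichromatic $2$-path and no induced bichromatic copy of $2K_2$. For a bichromatic $2$-path $xuy$ with $xu \in R$ and $uy \in B$, the red edge $xu$ forces $x,u$ into a common part, say $V(\Gamma_1)$, while the blue joining edge $uy$ forces $y \in V(\Gamma_2)$; but then $xy$ is a joining edge of $\Gamma_1 \vee \Gamma_2$, contradicting that the path is induced. For a bichromatic $2K_2$ on $\{u,x,v,y\}$ with $ux \in R$ and $vy \in B$, the red edge places $u,x$ in a common part, say $V(\Gamma_1)$; the blue edge $vy$ is a joining edge, so exactly one of $v,y$ lies in $V(\Gamma_2)$. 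Whichever it is, call it $w$, is adjacent via a joining edge to both $u$ and $x$, producing an extra edge inside $\{u,x,v,y\}$ and again contradicting ``induced''.

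Together these two checks show the $2$-edge-colouring is non-trivially chromatically invariant, and the lemma follows. The argument is essentially a case analysis with no real obstacle; the only thing worth flagging is the degenerate-looking case where the red edge $ux$ has both ends in the same part yet the blue edge $vy$ has one end $v$ in that same part, which is handled uniformly by the observation above that the ``other'' end of the blue edge is adjacent to everything on the opposite side.
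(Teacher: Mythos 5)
Your colouring (red on the edges inside $\Gamma_1$ and $\Gamma_2$, blue on the joining edges) is exactly the one the paper uses, and your verification is correct; the only difference is that you check the forbidden-subgraph conditions of Theorem \ref{thm:firstCharact} directly, whereas the paper notes that every independent set of a join lies entirely in one part and appeals to Theorem \ref{thm:I1I2}. Both verifications are immediate, so this is essentially the same proof.
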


\begin{proof}
	Let $J$ be the set of joining edges of $\Gamma_1 \vee \Gamma_2$.
	Let $R = E(\Gamma_1) \cup E(\Gamma_2)$ and $B = J$.
	Since each of $\Gamma_1$ and $\Gamma_2$  have no isolated vertices, each vertex of $\Gamma_1 \vee \Gamma_2$ is incident with at least one red edge and one blue edge.
	For any pair of disjoint independent sets $I_1,I_2 \subset V(\Gamma)$ we have $I_1,I_2 \subset V(\Gamma_1)$ or $I_1,I_2 \subset V(\Gamma_2)$.
	The result follows by observing that the $2$-edge-coloured graph $(\Gamma,R,B)$ satisfies the hypothesis of Theorem \ref{thm:I1I2}
\end{proof}

\begin{lemma}\label{lem:monochromeJoin}
	Let $G=\left(\Gamma,R,B\right)$ be  non-trivial chromatically invariant $2$-edge-coloured graph. If there exists graphs $\Gamma_1$ and $\Gamma_2$  so that $\Gamma = \Gamma_1 \vee \Gamma_2$ and neither of $\Gamma_1$ or $\Gamma_2$ is a join, then all of the joining edges of $\Gamma_1 \vee \Gamma_2$ have the same colour in $G$.
\end{lemma}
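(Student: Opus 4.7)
The plan is to use Theorem \ref{thm:I1I2} repeatedly on carefully chosen independent sets. The crucial structural observation is that in $\Gamma = \Gamma_1 \vee \Gamma_2$, every independent set lies entirely in $V(\Gamma_1)$ or entirely in $V(\Gamma_2)$, since every pair consisting of one vertex from each side is adjacent. Consequently, for any non-empty independent $I_1$ of $\Gamma_1$ and any non-empty independent $I_2$ of $\Gamma_2$, the subgraph $G[I_1 \cup I_2]$ consists precisely of the joining edges between $I_1$ and $I_2$, and Theorem \ref{thm:I1I2} forces all of these to carry the same colour.

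I would then translate the hypothesis ``$\Gamma_i$ is not a join'' into the equivalent statement that $\overline{\Gamma_i}$ is connected (a non-trivial partition witnessing $\Gamma_i$ as a join corresponds to a disconnection of $\overline{\Gamma_i}$, and conversely). Given this, I would establish the following propagation step: if $x, x' \in V(\Gamma_1)$ satisfy $xx' \notin E(\Gamma_1)$, then for every $y \in V(\Gamma_2)$ the joining edges $xy$ and $x'y$ receive the same colour. This is immediate from the observation above, applied to $I_1 = \{x, x'\}$ and $I_2 = \{y\}$. Iterating this identity along a path of non-edges in $\Gamma_1$, i.e.\ a path in $\overline{\Gamma_1}$, and using that $\overline{\Gamma_1}$ is connected, I conclude that for every pair $x, x'' \in V(\Gamma_1)$ and every $y \in V(\Gamma_2)$, the colours of $xy$ and $x''y$ agree.

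The symmetric argument using connectivity of $\overline{\Gamma_2}$ shows that for every fixed $x \in V(\Gamma_1)$ and every pair $y, y'' \in V(\Gamma_2)$, the edges $xy$ and $xy''$ receive the same colour. Combining the two directions yields that all joining edges receive a common colour, as required. The only genuinely fiddly part is the degenerate situation in which $|V(\Gamma_1)| = 1$ or $|V(\Gamma_2)| = 1$, but in that case the corresponding connectivity argument is vacuous and the remaining direction still suffices (or both are singletons, in which case there is a single joining edge and there is nothing to prove). I expect no significant obstacle beyond confirming the equivalence of ``not a join'' with connectivity of the complement and handling these trivial cases cleanly.
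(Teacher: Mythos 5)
Your proposal is correct and follows essentially the same route as the paper: both arguments invoke Theorem \ref{thm:I1I2} on pairs of independent sets, use that a non-join has a connected complement, and propagate the colour of joining edges along a path in $\overline{\Gamma_1}$ (respectively $\overline{\Gamma_2}$) while holding a vertex on the other side fixed. The only cosmetic difference is that the paper packages the path in the complement as a chain of pairwise-intersecting independent sets, whereas you iterate the two-vertex case edge by edge; your explicit attention to the singleton cases is a minor point the paper leaves implicit.
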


\begin{proof}
	Let $\Gamma_1$ and $\Gamma_2$ be graphs that are not joins.
	Let $\Gamma = \Gamma_1 \vee \Gamma_2$.
	Let $G=\left(\Gamma,R,B\right)$ be  non-trivial chromatically invariant $2$-edge-coloured graph.
	
	We first show that for any pair $y_1,y_2 \in V(\Gamma_2)$ there is a sequence of independent sets $I_1,I_2,\dots, I_\ell$ so that $y_1 \in I_1$, $y_2 \in I_\ell$ and $I_i \cap I_{i+1} \neq \emptyset$ for all $1 \leq i \leq \ell-1$.
	Since $\Gamma_2$ is not a join, its complement, $\overline{\Gamma_2}$, is connected.
	Therefore there is a path from $y_1$ to $y_2$ in $\overline{\Gamma_2}$.
	The edges of such a path form a sequence of independent sets in $\Gamma_2$: $I_1,I_2,\dots, I_\ell$ so that $y_1 \in I_1$, $y_2 \in I_\ell$ and $I_i \cap I_{i+1} \neq \emptyset$ for all $1 \leq i \leq \ell$.
	
	Consider $v \in V(\Gamma_1)$ and $y_1,y_2 \in V(\Gamma_2)$.
	Since $\Gamma = \Gamma_1 \vee \Gamma_2$, we have $vy_1,vy_2 \in E(\Gamma)$.
	Let $I_1,I_2,\dots, I_\ell$ be a sequence of independent sets in so that $y_1 \in I_1$, $y_2 \in I_\ell$ and $I_i \cap I_{i+1} \neq \emptyset$ for all $1 \leq i \leq\ell-1$.	
	By Lemma \ref{thm:I1I2} edges between $I_i$ and $v$ all have the same colour.
	Since $I_i \cap I_{i+1} \neq \emptyset$, all of the edges between $I_{i+1}$ and $v$ all have that same colour.
	Since $v$ is adjacent to every vertex in $\Gamma_2$ it follows that the edges between $I_{1}$ and $v$ have the same colour as those between $I_{\ell}$ and $v$.
	Therefore every joining edge with an end at $v$ has the same colour.
	
	Similarly, for any $u \in  V(\Gamma_2)$, every joining edge with an end at $u$ has the same colour in $G$.
	Thus it follows that  all of the joining edges of $\Gamma_1 \vee \Gamma_2$ have the same colour in $G$.	
\end{proof}

\begin{lemma}\label{lem:isaJoin}
	If $G = \left(\Gamma,R,B\right)$ is a chromatically invariant $2$-edge-coloured graph in which every vertex is incident with a red edge and a blue edge, then $\Gamma$ is a join.
\end{lemma}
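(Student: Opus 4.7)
Plan: I would argue by contradiction. Suppose $\Gamma$ is not a join; then $\overline{\Gamma}$ is connected. Fix any $u \in V(\Gamma)$; since every vertex is incident with both colours, $R(u)$ and $B(u)$ are non-empty, and writing $Z(u) = V(\Gamma) \setminus (\{u\} \cup N(u))$, we must also have $Z(u) \neq \emptyset$, for otherwise $u$ would be universal and $\Gamma = \{u\} \vee (\Gamma - u)$ would be a join. My aim is to establish that the partition $V_R = \{u\} \cup R(u) \cup Z(u)$ and $V_B = B(u)$ realises $\Gamma$ as $\Gamma[V_R] \vee \Gamma[V_B]$, contradicting the assumption that $\Gamma$ is not a join.

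Two of the required adjacencies come for free: $u$ is adjacent to every element of $V_B = B(u)$ by definition, and by Theorem \ref{thm:firstCharact} (no induced bichromatic $2$-path), every vertex of $R(u)$ is adjacent to every vertex of $B(u)$. So the key claim reduces to showing that every $z \in Z(u)$ is adjacent in $\Gamma$ to every $w \in B(u)$.

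I would prove this by contradiction: assume $z \in Z(u)$, $w \in B(u)$, and $zw \notin E(\Gamma)$, and let $r$ be a red neighbour of $z$ (which exists by hypothesis). Applying Theorem \ref{thm:I1I2} to $I_1 = \{u, z\}$ (independent because $uz \notin E$) and $I_2 = \{r\}$ shows that if $r \in N(u)$ then $ur$ and $zr$ must share a colour, forcing $r \in R(u)$; thus $r \in R(u) \cup Z(u)$. Suppose $r \in Z(u)$. By a parallel application of Theorem \ref{thm:I1I2} (this time to $\{u, r\}$ and $\{w\}$), any edge $rw$ must be blue. If $rw \in E(\Gamma)$, then $zrw$ is an induced bichromatic $2$-path (since $zr \in R$, $rw \in B$, and $zw \notin E$); if $rw \notin E(\Gamma)$, the four vertices $\{u, z, r, w\}$ induce a bichromatic $2K_2$ with edges $zr$ (red) and $uw$ (blue) and all remaining pairs absent. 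Either outcome contradicts Theorem \ref{thm:firstCharact}, so every red neighbour of $z$ lies in $R(u)$.

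Fix $r \in R(u) \cap R(z)$. Applying Theorem \ref{thm:I1I2} to $I_1 = \{z, w\}$ (independent because $zw \notin E$) and $I_2 = \{r\}$ forces $rw \in R$, so $r$ is a common red neighbour of $u$, $z$, and $w$. Picking a blue neighbour $s$ of $r$ (which exists since $r$ has both colours) and using the joinedness of $R(r)$ and $B(r)$ (itself an instance of Theorem \ref{thm:firstCharact}), the vertex $s$ is adjacent to each of $u$, $z$, and $w$; applying Theorem \ref{thm:I1I2} in turn to $\{u, z\}, \{s\}$ and $\{z, w\}, \{s\}$ forces $us$, $zs$, and $ws$ to share a colour. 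The main obstacle is to iterate this propagation of monochromatic constraints, using the connectedness of $\overline{\Gamma}$ to range over the graph, until it necessarily produces an induced bichromatic $2$-path or bichromatic $2K_2$ forbidden by Theorem \ref{thm:firstCharact}; carrying out this final case analysis is the substantive technical step.
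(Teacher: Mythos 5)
Your proposal is incomplete by your own admission, and the gap is not a routine verification: the strategy cannot be finished as described. The key claim you reduce everything to -- that for the partition $V_R = \{u\} \cup R(u) \cup Z(u)$, $V_B = B(u)$, every $z \in Z(u)$ is adjacent to every $w \in B(u)$ -- is not a consequence of chromatic invariance plus the local structure you propagate. Concretely, take $V = \{u,w,z,t,c_1,c_2\}$ with red edges $ut, wt, zt, c_1c_2$, blue edges $uw$ and all six edges between $\{u,w,z,t\}$ and $\{c_1,c_2\}$, and non-edges $uz, wz$. One checks there is no induced bichromatic $2$-path and no induced bichromatic $2K_2$, so by Theorem \ref{thm:firstCharact} this graph is chromatically invariant, and every vertex is incident with both colours. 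Here $w \in B(u)$, $z \in Z(u)$, and $zw \notin E$, so your proposed partition is not a join partition; yet the graph is a join (via $\{u,w,z,t\} \vee \{c_1,c_2\}$), exactly as the lemma predicts. Since this configuration is realizable in a chromatically invariant graph, no amount of iterating the monochromatic-constraint propagation (which only ever invokes Theorems \ref{thm:firstCharact} and \ref{thm:I1I2} on local data) can produce the contradiction you are hoping for; you would have to use the hypothesis that $\Gamma$ is not a join in some essential, global way, and the proposal gives no mechanism for doing so. Your intermediate deductions (every red neighbour of $z$ lies in $R(u)$; $r$ is a common red neighbour of $u,z,w$; the blue neighbour $s$ of $r$ is adjacent to all three with edges of a common colour) are all correct, and indeed all are satisfied by the example above with $r=t$ and $s=c_1$ -- which is precisely why the "substantive technical step" cannot be carried out.

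The paper takes a genuinely different route: induction on the number of vertices via a minimal counterexample. It first shows there is a vertex $v$ whose deletion leaves every remaining vertex still incident with both colours (this is where the alternating-path case analysis lives), applies minimality to decompose $\Gamma - v$ as a join $\bigvee_i \Gamma[X_i]$ with monochromatic joining edges (Lemma \ref{lem:monochromeJoin}), and then argues that the red and blue neighbours of $v$ force $v$ to be completely joined to some part $X_i$, using induced bichromatic $2$-paths through $v$. If you want to salvage a non-inductive argument, you would need to identify the correct join partition globally (e.g., via connectivity of the complement and the structure imposed by Theorem \ref{thm:I1I2}), not the red/blue neighbourhood partition of a single vertex.
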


\begin{proof}
	Let $G = (\Gamma,R,B)$ be a minimum counterexample with respect to number of vertices.
	We first show that there exists a vertex $v \in V(\Gamma)$ so that every vertex of $G-v$ is incident with both a red edge and a blue edge.
	
	If no such vertex exists, then for every $x \in V(\Gamma)$ there exists $y \in V(\Gamma)$ so that $xy \in E(\Gamma)$ and the edge $xy$ is the only one of its colour incident with $y$.
	We proceed in cases based on the existence of a pair $u,v \in V(\Gamma)$ so that $uv$ is the only one of its colour incident with $u$ and the only one of its colour incident with $v$.
	
	If such a pair exists, then, without loss of generality, let $uv$ be red.
	Since $G$ has no induced bichromatic $2$-path, $N_{G[B]}(u) = N_{G[B]}(v)$.
	If $V(\Gamma) = \{u,v\} \cup N_{G[B]}(u)$, then $\Gamma$ is a join.
	As such the set $Q = V(\Gamma) \setminus \left(\{u,v\} \cup N_{G[B]}(u)\right)$ is non-empty.
	Notice that as $G$ has no induced bichromatic $2$-path, all edges between $Q$ and $N_{G[B]}(u)$ are blue.
	Since every vertex of $G$ is incident with both a red and a blue edge, it follows that every vertex of $Q$ is incident with a red edge in the $2$-edge-coloured graph $G[Q]$.
	As $\Gamma$ is not a join, there exists $q \in Q$ and $x \in   N_{G[B]}(u)$ so that $qx \notin E(\Gamma)$.
	Let $rq$ be a red edge in $G[Q]$. 
	Notice $rx \notin E(\Gamma)$, as otherwise such an edge is blue in $G$ and so $xrq$ is an induced bichromatic $2$-path in $G$.
	Therefore the subgraph induced by $\{u,x,q,r\}$ is a bichromatic copy of $2K_2$.
	This is a contradiction as $G$ is chromatically invariant.
	Therefore no such pair $u,v$ exists.
	
	Since no such pair $u,v$ exists, there is a maximal sequence of  vertices of $\Gamma$: $u_1,u_2,\dots u_k$ so that  $u_iu_{i+1} \in E(\Gamma)$ and the edge $u_iu_{i+1}$ is the only one of its colour incident with $u_{i+1}$ for all $1 \leq i \leq k-1$.
	We further note that, without loss of generality, vertices with an even index are adjacent with a single red edge and vertices with an odd index (other that $u_1$) are incident with a single blue edge.
	Thus $u_1,u_2,\dots,u_k$ is an path whose edges alternate being red and blue.
	If $k \geq 4$, then since $G$ has no induced bichromatic $2$-path, we have $u_2u_4 \in E(\Gamma)$. 
	However, this edge is either a second red edge incident with $u_2$ or a second blue edge incident with $u_4$. This is a contradiction, and so $k = 3$.
	
	Since this path was chosen to be maximal, it follows that the edge between $u_1$ and $u_3$ is the only one of its colour incident with $u_1$.
	Since $u_1u_2$ is red, it follows that $u_1u_3$ is blue.
	But then $u_3$ is incident with two blue edges: $u_2u_3$ and $u_1u_3$.
	This is a contradiction.
	And so there exists a vertex $v \in V(\Gamma)$ so that every vertex of $G-v$ is incident with both a red edge and a blue edge.
	
	Consider $v \in V(\Gamma)$ so that every vertex of $G-v$ is incident with both a red edge and a blue edge.
	By Lemma \ref{lem:monochromeJoin}, $G-v$ is a  chromatically invariant $2$-edge-coloured  graph.
	By the minimality of $G$, we have that $\left(\Gamma - v \right)$ is  a join.
	Therefore $V(\Gamma-v)$ admits a partition  $\{X_1,X_2,\dots, X_k\}$ such that $\left(\Gamma - v \right)[X_i]$ is not a join for each $1 \leq i \leq k$ and for each $1 \leq i < j \leq k$ we have $\left(\Gamma - v \right)[X_i \cup X_j] = \left(\Gamma - v \right)[X_i] \vee \left(\Gamma - v \right)[X_j]$. 
	
	Notice for any $1 \leq i \leq k$ that if $v$ is adjacent to every vertex of $X_i$, then $\Gamma$ is necessarily a join.
	Thus, for every for every $1 \leq i \leq k$ vertex $v$ is not adjacent to at least one vertex of $X_i$.
	
	By hypothesis, $v$ in incident with a red edge and a blue edge.
	Let $vr$ and $vb$ be such edges for some $r,b \in V(\Gamma)$.
	We proceed in cases based on the location of $r$ and $b$ within the partition $\{X_1,X_2,\dots, X_k\}$ of $V(\Gamma-v)$.
	
	Consider, without loss of generality, $r,b \in X_1$.
	Notice that by Theorem \ref{cor:subgraphs} and  Lemma \ref{lem:monochromeJoin}, for every $1 \leq i < j \leq k$, the joining edges of $\left(\Gamma - v \right)[X_i \cup X_j] = \left(\Gamma - v \right)[X_i] \vee \left(\Gamma - v \right)[X_j]$ are the same colour.
	Since each of $r$ and $b$ are adjacent to every vertex of $X_2$, then either $vry$ or $vby$ is an induced bichromatic $2$-path for every $y \in X_2$.
	(Whether or not  $vry$ or $vby$ is an induced bichromatic $2$-path depends on the colour of the joining edges between $X_1$ and $X_2$ ).
	Since $G$ is chromatically invariant, by Theorem \ref{thm:firstCharact}	no such path can be exist 
	And so $v$ is adjacent to every vertex in $X_2$, a contradiction.
	
	Consider, without loss of generality, $r \in X_1$ and $b \in X_2$.
	Further assume without loss of generality, that all of the joining edges of $\left(\Gamma - v \right)[X_1 \cup X_2] = \left(\Gamma - v \right)[X_1] \vee \left(\Gamma - v \right)[X_2]$ are red.
	Since $\Gamma$ is not a join, there is at least one vertex of $X_1$, say $x_1$, that is not adjacent to $v$.
	However, $x_1bv$ is an induced bichromatic $2$-path.
	Since $G$ is chromatically invariant, by Theorem \ref{thm:firstCharact}	no such path can be exist.
	This is a contradiction.
\end{proof}

Together Lemmas \ref{lem:joinsWork} and \ref{lem:isaJoin} imply the following characterization of those graphs which admit non-trivial chromatically invariant $2$-edge-colourings in which every vertex is incident with at least one edge of both colours.

\begin{theorem}\label{thm:fullJoin}
	A graph $\Gamma$ admits a non-trivial chromatically invariant $2$-edge-colouring in which every vertex is incident with both a red edge and a blue edge if and only if $\Gamma$ is the join of two graphs, each of which has no isolated vertices.
\end{theorem}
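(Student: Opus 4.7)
The $(\Leftarrow)$ direction is immediate from Lemma \ref{lem:joinsWork}, which produces the desired chromatically invariant colouring whenever $\Gamma = \Gamma_1 \vee \Gamma_2$ and neither $\Gamma_i$ has an isolated vertex. For the $(\Rightarrow)$ direction, Lemma \ref{lem:isaJoin} already guarantees that $\Gamma$ is a join, and the task reduces to extracting a specific join decomposition whose two sides are each free of isolated vertices. My plan is to start from the finest join decomposition $\Gamma = \bigvee_{i=1}^{k} \Gamma[X_i]$, in which each $\Gamma[X_i]$ is not a join, and then, using Corollary \ref{cor:subgraphs} together with Lemma \ref{lem:monochromeJoin}, to record that for each pair $i < j$ all joining edges between $X_i$ and $X_j$ carry a single colour $c_{ij} \in \{R,B\}$.

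I would then split on $k$. If $k \geq 4$, partition $\{1, \ldots, k\}$ into two blocks $A$ and $B$ of size at least two; then any $v \in X_i$ with $i \in A$ is joined to every vertex of $X_j$ for any other $j \in A$, so the $A$-side has no isolated vertices, and symmetrically for $B$. If $k = 2$, an isolated vertex of $\Gamma[X_1]$ would meet only joining edges to $X_2$, all of colour $c_{12}$, contradicting that every vertex meets both colours; so neither $\Gamma[X_1]$ nor $\Gamma[X_2]$ can contain an isolated vertex, and $\Gamma[X_1] \vee \Gamma[X_2]$ is the required decomposition.

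The main obstacle is the case $k = 3$, which I plan to resolve by a pigeonhole-style argument on the three joining colours $c_{12}, c_{13}, c_{23}$. If each of $\Gamma[X_1], \Gamma[X_2], \Gamma[X_3]$ contained an isolated vertex $v_i$, then the requirement that $v_i$ meet both colours would force $c_{ij} \neq c_{i\ell}$ whenever $\{j, \ell\} = \{1,2,3\} \setminus \{i\}$, making $c_{12}, c_{13}, c_{23}$ pairwise distinct---impossible with only two colours. Hence some $\Gamma[X_i]$ is free of isolated vertices; I would then take that $X_i$ as one side of the decomposition and the join $\Gamma[X_j] \vee \Gamma[X_\ell]$ of the remaining two parts as the other side, which is isolated-vertex-free by the same argument used for $k \geq 4$.
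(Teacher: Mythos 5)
Your proposal is correct, and its skeleton matches the paper's: the backward direction is Lemma \ref{lem:joinsWork} and the forward direction rests on Lemma \ref{lem:isaJoin}. Where you genuinely add something is in noticing that Lemma \ref{lem:isaJoin} only delivers ``$\Gamma$ is a join,'' whereas the theorem asserts the stronger statement that $\Gamma$ is a join of two graphs \emph{each without isolated vertices} --- these are not equivalent (a star $K_{1,n}$ is a join, but of graphs consisting entirely of isolated vertices). The paper's one-line justification (``Together Lemmas \ref{lem:joinsWork} and \ref{lem:isaJoin} imply\dots'') silently elides this upgrade, and your case analysis on the number $k$ of parts in the finest join decomposition supplies exactly the missing bridge. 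Your three cases all check out: for $k\geq 4$ any $2$--$2$ (or larger) split of the parts works without using the colour hypothesis at all; for $k=2$ an isolated vertex of one part would see only joining edges, which are monochromatic by Lemma \ref{lem:monochromeJoin}, contradicting the incidence hypothesis; and for $k=3$ your pigeonhole on the three joining colours $c_{12},c_{13},c_{23}$ correctly shows they cannot be pairwise distinct, so some part is isolated-vertex-free and can serve as one side of the join, with the join of the other two parts (automatically isolated-vertex-free since both are nonempty) as the other side. One small point to make explicit when writing this up: Lemma \ref{lem:monochromeJoin} is stated for non-trivial chromatically invariant graphs, so when you apply it to the induced subgraphs $G[X_i\cup X_j]$ via Corollary \ref{cor:subgraphs} you should note that if such a subgraph happens to be monochromatic the conclusion about its joining edges holds vacuously; this is the same convention the paper uses inside the proof of Lemma \ref{lem:isaJoin}.
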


In \cite{C19} the authors fully characterize those oriented graphs that admit a chromatically invariant orientation. 
Theorem \ref{thm:fullJoin} presents a partial analogue for $2$-edge-coloured graphs.
It remains unknown which graphs admit a non-trivial chromatically invariant $2$-edge-colouring when we allow for vertices to be incident with edges of a single colour.

\section{Roots of Chromatic Polynomials of $2$-edge-coloured Graphs}\label{sec:roots}

As with many graph polynomials it is natural to study the location of the roots of the chromatic polynomial of $2$-edge-coloured graphs. The chromatic polynomial of graphs is well studied, and one topic of interest is the location of the roots. A root of a chromatic polynomial of a graph is called a \emph{chromatic root}. See Figure~\ref{figchromroots6} for the chromatic roots of all connected graphs on 6 vertices obtained by computer search. We call a root of a chromatic polynomial of a $2$-edge-coloured graph a \emph{monochromatic root} if the graph is monochromatic and a \emph{bichromatic root} if the graph is bichromatic. See Figure~\ref{figroots6} for the bichromatic roots of all connected $2$-edge-coloured graphs on 6 vertices obtained by computer search. Recall the chromatic polynomial of a monochromatic graph is simply the chromatic polynomial of the underlying graph. Therefore the collection of all chromatic roots is exactly the collection of all monochromatic roots. In this section we provide results on bichromatic roots. 

\begin{figure}[h]~\label{figroots}
\centering
\begin{minipage}{.5\textwidth}
  \centering
	\includegraphics[scale=0.25]{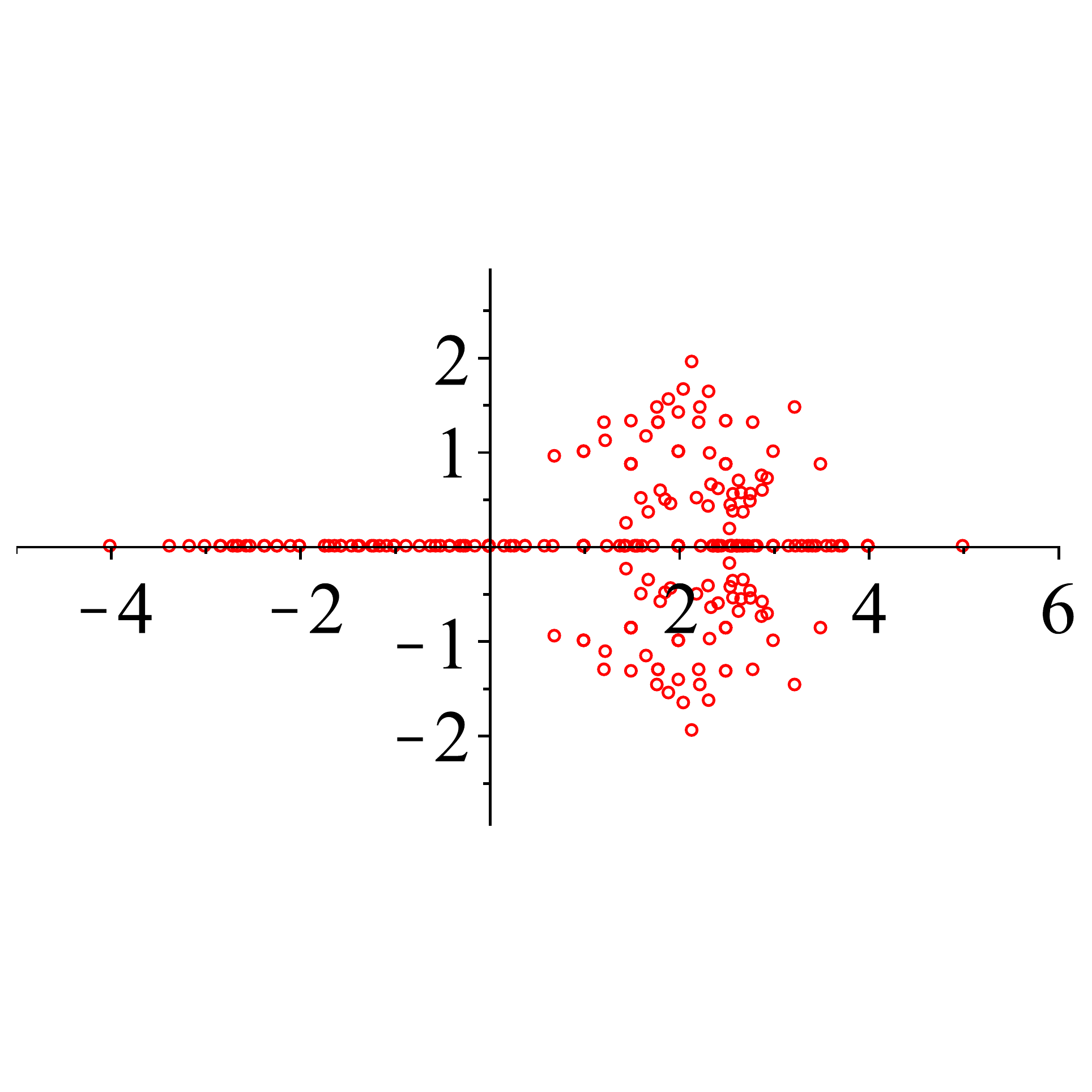}
	\caption{Bichromatic roots of all connected $2$-edge-coloured graphs on 6 vertices}
  \label{figroots6}
  \end{minipage}%
\begin{minipage}{.5\textwidth}
  \centering
	\includegraphics[scale=0.25]{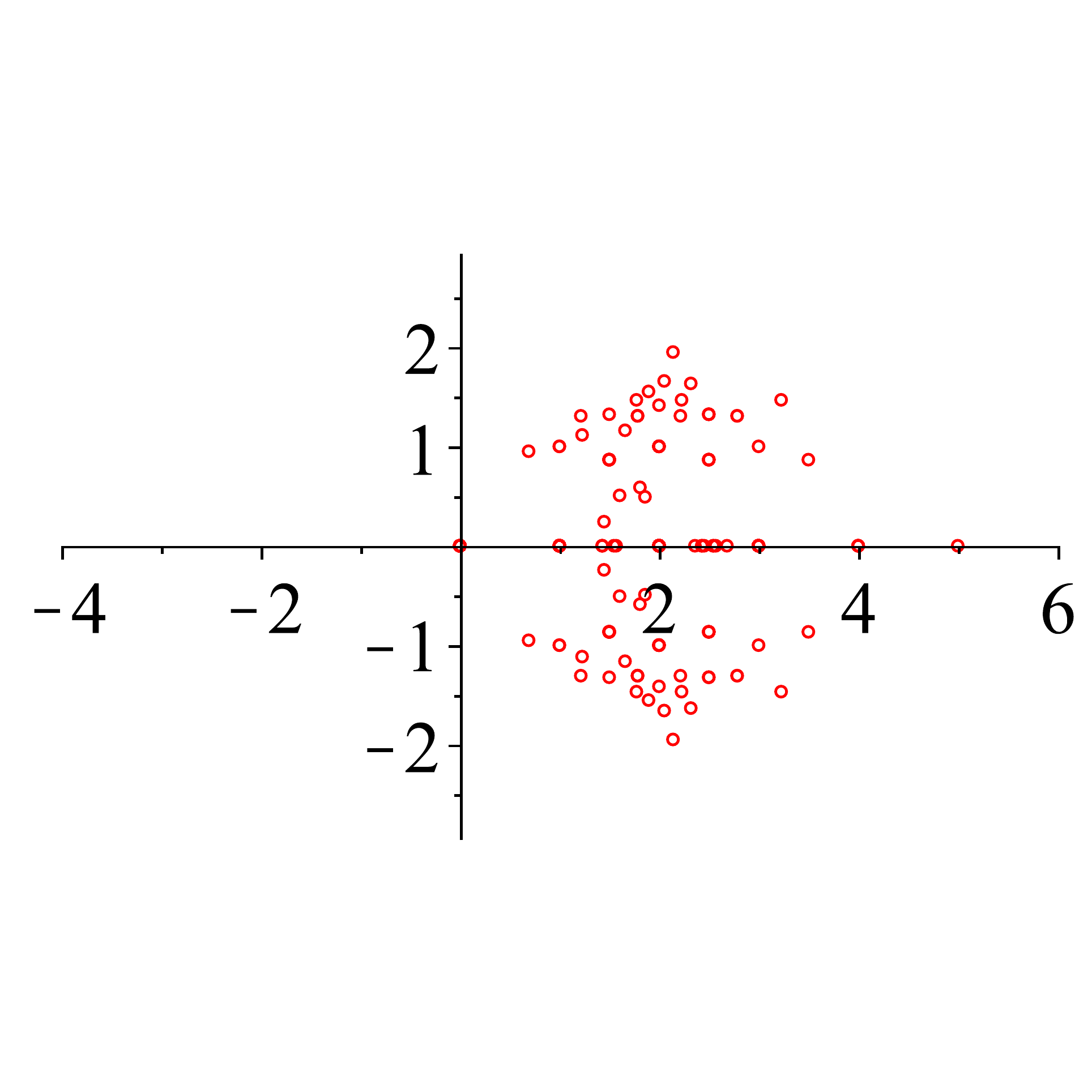}
	\caption{Chromatic roots of all connected graphs on 6 vertices}
  \label{figchromroots6}
  \end{minipage}

\end{figure}

We begin with a study of the real roots. The real chromatic roots are always positive \cite{R68} as the coefficients of the chromatic polynomial of a graph alternate in sign and there are no real roots in $(0,1)\cup (1,\frac{32}{27}]$. In contrast we will show the bichromtic roots are dense in $\mathbb{R}$ and the collection of all rational bichromatic roots is $\mathbb{Z}$.
For $n > 1$, let $K_n^r$ and $K_n^b$  denote monochromatic copies of $K_n$ with red and blue edges, respectively. 
For a pair of $2$-edge-coloured graphs $G= \left(\Gamma_G,R_G, B_G\right)$ and $H = \left(\Gamma_H, R_H, B_H\right)$, let $G \cup H$ denote the disjoint union of $G$ and $H$.

\begin{theorem}
	\label{thm:GK2}
	Let $G=(\Gamma,R,B)$ be a $2$-edge-coloured graph on $n$ vertices so that $\chi(G)=n$. 
	We have
	
	\[P(G \cup K_2^r, \lambda)=\lambda(\lambda-1) \cdots (\lambda-n+1)(\lambda^2-\lambda-2|B|).\]
\end{theorem}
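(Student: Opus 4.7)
The plan is to count $k$-colourings of $G \cup K_2^r$ by first counting colourings of $G$ and then, for each such colouring, counting the valid extensions to the two vertices of $K_2^r$. The hypothesis $\chi(G)=n$ forces every proper colouring of $G$ to be injective on $V(G)$, since using any colour twice would realise $G$ with at most $n-1$ colours. I would then observe that condition (2) in the definition of a colouring of a $2$-edge-coloured graph is automatic whenever $c$ is injective: the equality $c(u)=c(v)$ forces $u=v$, and then $x\neq y$ follows from $R\cap B=\emptyset$. Hence $P(G,\lambda)=\lambda(\lambda-1)\cdots(\lambda-n+1)$, matching the situation immediately preceding Equation~(\ref{math:complete}).

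Next I would fix a $k$-colouring $c$ of $G$ and count the extensions to the two vertices $a,b$ of $K_2^r$, with $ab$ its red edge. Condition (1) only demands $c(a)\neq c(b)$, since $a$ and $b$ are adjacent to no vertex of $G$. For condition (2), the only pairs $(ux,vy)\in R\times B$ whose constraint is not already trivially satisfied are those with $ux=ab$ and $vy\in B_G$: pairs lying entirely in $G$ are handled by injectivity, and $K_2^r$ contains no blue edge. The remaining requirement therefore reduces to $\{c(a),c(b)\}\neq\{c(v),c(y)\}$ for every $vy\in B_G$, read as an inequality of $2$-element subsets of $\{1,\dots,k\}$.

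The heart of the argument is turning these forbidden equalities into an exact count. Because $c$ is injective on $V(G)$, distinct blue edges of $G$ determine distinct $2$-element subsets $\{c(v),c(y)\}$, so there are exactly $|B|$ forbidden unordered colour pairs, corresponding to $2|B|$ forbidden ordered pairs $(c(a),c(b))$. Among all $k(k-1)$ ordered pairs of distinct colours, this leaves $k^2-k-2|B|$ valid extensions. Multiplying by $P(G,k)=k(k-1)\cdots(k-n+1)$ gives the claimed evaluation at every non-negative integer $k$; agreement on infinitely many integers forces equality of polynomials in $\lambda$.

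The main subtlety to get right is the bijection between $B_G$ and the set of forbidden unordered colour pairs, which is what makes the ``$-2|B|$'' clean. This uses injectivity of $c$ in an essential way: without the hypothesis $\chi(G)=n$, two distinct blue edges could collapse to the same unordered colour pair and the subtraction would overcount. The rest of the proof is a careful parsing of conditions (1) and (2) for the disjoint union $G\cup K_2^r$, confirming that no other interaction between the two components contributes an obstruction.
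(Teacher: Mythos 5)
Your proof is correct and follows essentially the same route as the paper's: colour $G$ first (injectively, since $\chi(G)=n$), then count the $k^2-k-2|B|$ valid extensions to $K_2^r$ and multiply. You add a worthwhile detail the paper leaves implicit, namely that injectivity of $c$ on $V(G)$ guarantees distinct blue edges forbid distinct unordered colour pairs, so the subtraction of $2|B|$ does not overcount.
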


\begin{proof}
	Let $G=(\Gamma,R,B)$ be a $2$-edge-coloured graph on $n$ vertices so that $\chi(G)=n$. 
	For fixed $k > 0$, we construct a $k$-colouring of $G \cup K_2^r$ by first colouring vertices $G$ and then those of $K_2^r$. 
	As $\chi(G)=n$, any $k$-colouring assigns each vertex of $G$ a unique colour.
	There are $k(k-1) \cdots (k-|V|+1)$ such colourings. 
	We can then colour vertices of $K_2^r$ with any two different colours unless there exists $b \in B$ whose ends have been assigned those two colours.
	Thus each $b \in B$ prohibits two possible colourings of the $K_2^r$. 
	Therefore given any $k$-colouring of $G$ there are $k^2-k-2|B|$ such colourings of $K_2^r$.
	And so \[P(G \cup K_2^r, \lambda)=\lambda(\lambda-1) \cdots (\lambda-n+1)(\lambda^2-\lambda-2|B|).\]
\end{proof}

\begin{corollary}
	\label{cor:introot}
	Let $n > 1$ be an integer.
	We have 
	\[P(K_n^b \cup K_2^r, \lambda)=\lambda(\lambda-1) \cdots (\lambda-n)(\lambda+n-1).\]
\end{corollary}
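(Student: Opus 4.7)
The plan is to apply Theorem \ref{thm:GK2} directly, with $G = K_n^b$ playing the role of the $2$-edge-coloured graph on $n$ vertices. To invoke the theorem, I first need to verify its two hypotheses for this choice of $G$: that the chromatic number equals the number of vertices, and to identify $|B|$. Since $K_n^b$ has underlying graph $K_n$, every proper colouring of the underlying graph assigns all $n$ vertices distinct colours, so certainly $\chi(K_n^b) \geq \chi(K_n) = n$; and the identity colouring shows $\chi(K_n^b) = n$. Every edge of $K_n^b$ is blue, so $|B| = \binom{n}{2} = \tfrac{n(n-1)}{2}$.

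Next I would substitute these values into the conclusion of Theorem \ref{thm:GK2} to obtain
\[
P(K_n^b \cup K_2^r, \lambda) = \lambda(\lambda-1)\cdots(\lambda - n + 1)\bigl(\lambda^2 - \lambda - n(n-1)\bigr).
\]
The only remaining task is to factor the quadratic $\lambda^2 - \lambda - n(n-1)$. Observing that $\lambda = n$ is a root (since $n^2 - n - n(n-1) = 0$), polynomial division yields
\[
\lambda^2 - \lambda - n(n-1) = (\lambda - n)(\lambda + n - 1).
\]
Inserting this factorisation into the expression above absorbs the factor $(\lambda - n)$ into the falling factorial, giving
\[
P(K_n^b \cup K_2^r, \lambda) = \lambda(\lambda - 1)\cdots(\lambda - n)(\lambda + n - 1),
\]
as desired.

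There is no real obstacle here: the result is essentially a direct specialisation of Theorem \ref{thm:GK2} combined with an elementary factoring of a quadratic whose discriminant $1 + 4n(n-1) = (2n-1)^2$ is a perfect square. The mild point worth being careful about is just keeping track of the length of the falling factorial before and after absorbing the extra linear factor $(\lambda - n)$.
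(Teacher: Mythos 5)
Your proposal is correct and matches what the paper intends: the corollary is stated without proof precisely because it is the direct specialisation of Theorem \ref{thm:GK2} to $G = K_n^b$ with $|B| = \binom{n}{2}$, followed by the factorisation $\lambda^2 - \lambda - n(n-1) = (\lambda-n)(\lambda+n-1)$. Your verification of the hypothesis $\chi(K_n^b)=n$ and the bookkeeping on the falling factorial are both accurate.
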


\begin{theorem}
	The closure of the rational bichromatic roots is $\mathbb{Z}$.
\end{theorem}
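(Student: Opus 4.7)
The plan is to show that the set of rational bichromatic roots is exactly $\mathbb{Z}$; since $\mathbb{Z}$ is closed in $\mathbb{R}$, its closure will then automatically be $\mathbb{Z}$.

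First, I would exhibit every integer as a bichromatic root via Corollary \ref{cor:introot}. For each integer $n \geq 2$,
\[
P(K_n^b \cup K_2^r, \lambda) = \lambda(\lambda-1)\cdots(\lambda-n)(\lambda+n-1),
\]
so its root set is $\{0, 1, \ldots, n\} \cup \{-(n-1)\}$. The $2$-edge-coloured graph $K_n^b \cup K_2^r$ has both a red and a blue edge, hence is bichromatic. Letting $n$ range over the integers at least $2$, the values $\{0, 1, \ldots, n\}$ exhaust the non-negative integers while the values $-(n-1)$ exhaust the negative integers. Thus every integer occurs as a bichromatic root.

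Second, I would show every rational bichromatic root is in fact an integer, via the rational root theorem. This requires $P(G, \lambda)$ to be a monic polynomial with integer coefficients for every $2$-edge-coloured graph $G$. Monicity is part of Theorem \ref{thm:basicResults}. For integer coefficients I would induct lexicographically on the pair $(n, e)$, where $n = |V(\Gamma)|$ and $e = \binom{n}{2} - |E(S(G))|$ counts the missing edges in $S(G)$. When $e = 0$ we have $S(G) \cong K_n$, and Equation (\ref{math:complete}) gives $P(G, \lambda) = \lambda(\lambda-1)\cdots(\lambda-n+1)$, which has integer coefficients. Otherwise, choose a non-adjacent pair $x, y$ and apply Equation (\ref{math:reduce}): the summand $P(M+xy, \lambda)$ has the same vertex count but strictly smaller $e$, while $P(M_{xy}, \lambda)$ has one fewer vertex. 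Both summands have integer coefficients by induction, and hence so does $P(G, \lambda)$.

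Combining the two steps yields that the set of rational bichromatic roots equals $\mathbb{Z}$, whose closure in $\mathbb{R}$ is $\mathbb{Z}$ itself. The only non-routine piece is verifying integer coefficients of $P(G,\lambda)$, but this falls out immediately from the same deletion-identification recurrence already used to prove Theorems \ref{thm:secondcoeff} and \ref{thm:thirdcoeff}, so I do not expect any real obstacle.
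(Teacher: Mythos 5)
Your proof is correct and follows essentially the same route as the paper's: exhibit every integer as a root of $P(K_{m+1}^b \cup K_2^r,\lambda)$ via Corollary \ref{cor:introot}, and invoke the rational root theorem to rule out non-integer rational roots. The one substantive addition is your inductive verification that $P(G,\lambda)$ has integer coefficients --- a hypothesis of the rational root theorem that the paper invokes without comment, and which does not follow from monicity plus integer-valuedness alone (e.g.\ $\lambda^3+\tfrac{1}{2}\lambda(\lambda-1)$ is monic and integer-valued yet has the rational root $\tfrac{1}{2}$), so this step is genuinely worth spelling out. One small adjustment to your induction: Equation (\ref{math:reduce}) requires the pair $x,y$ to be neither adjacent \emph{nor} at the ends of a bichromatic $2$-path, so the correct base case is ``every non-adjacent pair lies at the ends of a bichromatic $2$-path,'' in which Equation (\ref{math:complete}) still gives the falling factorial $\lambda(\lambda-1)\cdots(\lambda-n+1)$; restricting the base case to $S(G)\cong K_n$ leaves those intermediate graphs unhandled.
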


\begin{proof}
	By Theorem \ref{thm:basicResults}, for any $2$-edge-coloured graph $G$, the leading coefficient of $P(G,\lambda)$ is 1. 
	Therefore by the rational root theorem, any rational root of an oriented chromatic polynomial must be an integer. 
	Let $m > 0$ be an integer
	By Corollary \ref{cor:introot}, we have \[ P(K_{m+1}^b \cup K_2^r,\lambda)= \lambda(\lambda-1) \cdots (\lambda-(m+1))(\lambda+m).\]
	We observe $P(K_{m+1}^b \cup K_r^2,\lambda)$ has roots at $\lambda = -m,0,1,\ldots, m,m+1$.
\end{proof}

\begin{theorem}
	The closure of the real bichromatic roots is $\mathbb{R}$.
\end{theorem}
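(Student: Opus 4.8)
The plan is to show that every real number is a limit of real bichromatic roots. The proof of the preceding theorem produces, from $K_{m+1}^b \cup K_2^r$, a bichromatic graph whose chromatic polynomial has roots $-m,0,1,\dots,m+1$; letting $m$ range over the positive integers places all of $\mathbb{Z}$ in the set of real bichromatic roots. It therefore remains to approximate each non-integer target, that is, to produce bichromatic roots inside every bounded interval with arbitrarily small error.

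My approach is to exhibit a two-parameter family of bichromatic graphs whose chromatic polynomial carries a quadratic factor of the form $(\lambda-s)^2-\kappa$ for independently chosen non-negative integers $s$ and $\kappa$; such a factor has real roots $s\pm\sqrt{\kappa}$. The intended construction has two free vertices $p$ and $q$ that are non-adjacent but are each forced to avoid a common set of $s$ colours (contributing the factor $(\lambda-s)^2$, once the remaining vertices are shown to receive pairwise distinct colours), together with $\kappa$ independent red--blue cross-constraints of the $2K_2$ type, each of which, by condition (2), forbids exactly one joint colouring of the pair $(p,q)$ and hence subtracts the constant $\kappa$. All remaining vertices would be arranged into a bichromatic complete structure as in Equation~(\ref{math:complete}), contributing a falling-factorial factor $\lambda(\lambda-1)\cdots$, exactly as in the proof of Theorem~\ref{thm:GK2}. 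By Theorem~\ref{thm:basicResults} the leading coefficient is $1$, so $s\pm\sqrt{\kappa}$ are genuine roots of the resulting polynomial.

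The density argument is then elementary. Fix a real target $t$ and a tolerance $\varepsilon>0$. Choose $s$ to be a large integer, in particular with $s>t$, and let $\kappa$ be a non-square integer nearest to the real number $(s-t)^2$; then $\sqrt{\kappa}=(s-t)+O\!\left(\tfrac{1}{s-t}\right)$, so the root $s-\sqrt{\kappa}$ lies within $\varepsilon$ of $t$ as soon as $s$ is large enough. Choosing $\kappa$ non-square guarantees the root is irrational, and letting $s\to\infty$ drives the error to $0$. Thus the real bichromatic roots are dense in $\mathbb{R}$, and together with the containment $\mathbb{Z}\subseteq\mathbb{R}$ already established this shows their closure is all of $\mathbb{R}$.

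The main obstacle is the construction of the gadget realizing the factor $(\lambda-s)^2-\kappa$. The delicate point is that colourings are governed by condition (2), and incident edges of opposite colour at $p$ or $q$ create induced bichromatic $2$-paths that impose unintended constraints; if the forcing structure inadvertently ``completes,'' the quadratic degenerates and its discriminant becomes a perfect square, yielding only integer roots and no new information. This is precisely what happens for $K_n^b\cup K_2^r$, whose factor $\lambda^2-\lambda-n(n-1)$ splits over $\mathbb{Z}$ by Corollary~\ref{cor:introot}. One must therefore route the $s$ colour-exclusions and the $\kappa$ cross-constraints through edges of a single colour at each of $p$ and $q$, verify that the chromatic number of the host structure equals its order so that Equation~(\ref{math:complete}) applies, and confirm that exactly $\kappa$ joint colourings are removed. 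A mild variant, in which $p$ and $q$ are forced to differ and the factor becomes $(\lambda-s)(\lambda-s-1)-\kappa$ with roots $s+\tfrac{1\pm\sqrt{1+4\kappa}}{2}$, serves the density argument equally well, so the construction need only be pinned down up to such lower-order adjustments; once the factor is secured, the remaining approximation step is routine.
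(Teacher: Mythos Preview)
Your overall strategy---produce a bichromatic chromatic polynomial with a quadratic factor whose roots are $s\pm\sqrt{\kappa}$ (or the shifted variant $s+\tfrac{1\pm\sqrt{1+4\kappa}}{2}$), then let $s,\kappa$ range to get density---is exactly the paper's strategy. The density calculation in your third paragraph is fine. The gap is that you never actually build the gadget: you describe the properties it should have, then openly call its construction ``the main obstacle'' and leave it unresolved. That is the entire content of the proof, so as it stands this is a plan, not a proof.

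Concretely, your worry that incident edges of opposite colour at $p$ or $q$ create unintended bichromatic $2$-paths is real, and your sketch of ``$\kappa$ independent red--blue cross-constraints of the $2K_2$ type'' does not yet control this: a single pair $pa\in R$, $qb\in B$ interacts with the rest of the host via condition (2) in ways you have not tracked, and stacking $\kappa$ such pairs while keeping the host complete and keeping the count exactly $\kappa$ needs a genuine argument.

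The paper dispatches this cleanly, and in a way that is a special case of your ``mild variant'' $(\lambda-s)(\lambda-s-1)-\kappa$ with $s=0$. It takes $G$ to be any $2$-edge-colouring of $K_n$ with exactly $m$ blue edges (so $\chi(G)=n$), and applies Theorem~\ref{thm:GK2} to $G\cup K_2^r$ to get the factor $\lambda^2-\lambda-2m$; hence the real root $\tfrac{1-\sqrt{1+8m}}{2}$ is bichromatic for every $m\ge 1$. The shift parameter $s$ is then obtained \emph{after the fact} by joining with a monochromatic $K_n$ (all joining edges one colour), which translates roots by $n$. So the two ingredients you were trying to combine in a single gadget are realised separately: the discriminant via $|B|$ in $G\cup K_2^r$, and the integer shift via a join. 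Once you see Theorem~\ref{thm:GK2}, your density argument (or the paper's equivalent fractional-part argument) finishes the proof.
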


\begin{proof}
	For any $r \in \mathbb{R}$ let $d(r)=r-\lfloor r \rfloor$. 
	Furthermore let 
	
	\[A = \left\{ \frac{1-\sqrt{1+8m}}{2}: m \in \mathbb{N} \right\}.\]

	Let $G$ be a $2$-edge-colouring of a complete graph so that $|B|=m$.
	By Theorem \ref{thm:GK2} each element of $A$ is a real root of $P(G \cup K_2^r, \lambda)$.
	We first show that for any $r \in \mathbb{R}$ and $\epsilon >0$, there exists an $a \in A$ such that $|d(r)-d(a)|< \epsilon$. 
	
	Let $f(m)= \frac{1-\sqrt{1+8m}}{2}$ and $M=\frac{2}{\epsilon^2}$. 
	Note that $|f(m+1)-f(m)| < \epsilon$ for $m \geq M$. 
	Furthermore $f(m) \rightarrow -\infty$ as $m\rightarrow \infty$. 
	Thus for any $s \leq f(M)$, there exists an $m \geq M$ such that $f(m+1) \leq s \leq f(m) \leq r$.
	This implies $|s-f(m)| < \epsilon$. 
	Furthermore $|d(s)-d(f(m))| < \epsilon$. 
	By choosing $s \leq f(M)$ such that $d(s)=d(r)$ and let $a=f(m) \in A$.
	It then follows that $|d(r)-d(a)|< \epsilon$. 
	
	Let $G_a$ be a $2$-edge-coloured graph so that $P(G_a,\lambda)$ has a real root at $a$.
	Let $H_a$ be the $2$-edge-coloured graph formed the join of $G_a$ and any $2$-edge-coloured $K_n$, where all of the joining edges are red.
	We have 
	
	\[P(H_a,\lambda)=\lambda(\lambda-1) \cdots (\lambda-n+1)P(G_a,\lambda-n).\]

	Consider $n=\lfloor r \rfloor- \lfloor a \rfloor$.
	As $r=d(r)+\lfloor r \rfloor$ and $a+n=d(a)+\lfloor a \rfloor+n = d(a)+\lfloor r \rfloor$ we have 
	\[|r-(a+n)| = |d(r)+\lfloor r \rfloor-(d(a)+\lfloor r \rfloor)|=|d(r)-d(a)|< \epsilon.\]	
	Thus $H_a$ has a root at $a+n$.

\end{proof}

We turn now to study complex roots of chromatic polynomials of $2$-edge-coloured graphs.
We show they may have arbitrarily large modulus. 
To do this we study the limit of the complex roots of a $2$-edge-coloured complete bipartite graph.  

Let $p_n(z)=\sum_{j=1}^k\alpha_j(z)\lambda_j(z)^n$. 
Beraha, Kahne and Weiss studied the limits of the complex roots of such functions (as arising in recurrences).
They fully classified those values that occur as limits of roots of a family of polynomials.
See \cite{W78} for a full statement of the Bereha-Kahane-Weiss Theorem.

A limit of roots of a family of polynomials ${P_n}$ is a complex number, $z$, for which there are sequences of integers $(n_k)$ and complex numbers $(z_k)$ such that $z_k$ is a zero of $P_{n_k}$, and $z_k \to z$ as $k \to \infty$. 
The Bereha-Kahane-Weiss Theorem requires non-degeneracy conditions: no $\alpha_i$ is identically $0$, and $\lambda_i\not=\omega\lambda_k$  for any $i\not = k$ and any root of unity $\omega$. 
The Bereha-Kahane-Weiss Theorem implies that the limit of roots of $P_n(z)$ are precisely those complex numbers $z$ such that one of the following hold:

\begin{itemize}
	\item one of the $|\lambda_i(z)|$ exceeds all others and $\alpha_i(z)=0$; or
	\item $|\lambda_1(z)|=|\lambda_2(z)|=...=\lambda_{\ell}(z)>|\lambda_j(z)|$ for $\ell + 1\leq j \leq k$ for some $\ell \geq 2$. 
\end{itemize}

\begin{theorem}~\label{Thrm:complexroots}
	Non-real bichromatic roots can have arbitrarily large modulus.
\end{theorem}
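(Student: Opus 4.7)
My approach is to apply the Beraha--Kahane--Weiss framework set up just before the theorem statement to a concrete family of 2-edge-coloured complete bipartite graphs. For each $b \geq 1$, let $G_b$ be the 2-edge-coloured star $K_{1, b+2}$ with centre $v$, red leaf edges $vx_1, vx_2$, and $b$ blue leaf edges $vy_1, \dots, vy_b$. Any two edges of $G_b$ share the vertex $v$, so $G_b$ has no obstructing pair; the only non-edge constraint comes from the induced bichromatic 2-paths $x_i v y_j$, which force $c(x_i) \neq c(y_j)$ for all $i, j$. Partitioning the count according to whether $c(x_1) = c(x_2)$ gives the closed form
\[
P(G_b, \lambda) = \lambda(\lambda-1)(\lambda-2)^{b} + \lambda(\lambda-1)(\lambda-2)(\lambda-3)^{b}.
\]

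This expression is already in Beraha--Kahane--Weiss form with $\alpha_1(\lambda) = \lambda(\lambda-1)$, $\lambda_1(\lambda) = \lambda - 2$, $\alpha_2(\lambda) = \lambda(\lambda-1)(\lambda-2)$, and $\lambda_2(\lambda) = \lambda - 3$. The non-degeneracy hypotheses are immediate: neither $\alpha_i$ vanishes identically, and $(\lambda-2)/(\lambda-3)$ is a non-constant rational function and so cannot equal any root of unity. With only two $\lambda_i$'s, the ``tied maxima'' condition of Beraha--Kahane--Weiss reduces to $|z-2| = |z-3|$, i.e., the vertical line $\operatorname{Re}(z) = 5/2$; the ``dominant with vanishing coefficient'' condition contributes at most a few isolated real points (near $\lambda \in \{0, 1, 2\}$). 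Hence the limit set of roots of the family $\{P(G_b, \lambda)\}_b$ contains the entire unbounded line $\operatorname{Re}(z) = 5/2$.

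Given any $M > 0$, I then fix $z_0 = \tfrac{5}{2} + iT$ with $T$ large enough that $|z_0| > M + 1$ and $T > 1$. Since $z_0$ lies on the Beraha--Kahane--Weiss limit curve of $\{P(G_b, \lambda)\}$, for all sufficiently large $b$ the polynomial $P(G_b, \lambda)$ has a root $z_b$ within distance $1$ of $z_0$; such a $z_b$ has modulus at least $M$ and non-zero imaginary part, so it is a non-real bichromatic root. Because $G_b$ is a bichromatic 2-edge-coloured graph for every $b \geq 1$, this suffices.

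The only part that requires genuine care is the closed-form evaluation of $P(G_b, \lambda)$ and the verification that the isolated real limit points contributed by vanishing $\alpha_i$ do not alter the picture; everything else is a direct application of the framework quoted in the paper. The conceptual crux is observing that the ``bisector line'' $|z-2| = |z-3|$ is unbounded, which is what allows a standard two-term Beraha--Kahane--Weiss argument to deliver roots of arbitrarily large modulus rather than merely roots on a bounded curve.
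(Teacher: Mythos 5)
Your proposal is correct and follows essentially the same strategy as the paper: exhibit a one-parameter family of bichromatic graphs whose chromatic polynomial is in Beraha--Kahane--Weiss form and observe that the equimodular condition defines an unbounded vertical line, so the limit set of roots contains non-real points of arbitrarily large modulus. The paper uses $K_{2,n-2}$ with three blue edges (a three-term sum with limit line $\operatorname{Re}(z)=4$) where you use the star $K_{1,b+2}$ (a cleaner two-term sum with limit line $\operatorname{Re}(z)=5/2$); your closed form $\lambda(\lambda-1)(\lambda-2)^{b}+\lambda(\lambda-1)(\lambda-2)(\lambda-3)^{b}$ checks out, and the only cosmetic imprecision is that Beraha--Kahane--Weiss guarantees roots approaching $\tfrac{5}{2}+iT$ only along a subsequence of $b$, which is all the argument needs.
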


\begin{proof}	
	Let $n \geq 5$ be an integer.
 	Consider $K_{2,n-2}$ with partition $\{X,Y\}$ with $X = \{u,v\}$.
	Let $G = \left(K_{2,n-2},R,B\right)$ so that three of the edges incident with $v$ are blue and all other edges are red.
	Let $x,y,z\in Y$ be the vertices of $G$ that are adjacent to $v$ by a blue edge.
	In any $k$-colouring $c$, we have $c(u)\neq c(v)$.
	Further, for each $w \in Y \setminus\{x,y,z\}$ we must have $c(w) \neq c(x),c(y),c(z)$.
	We proceed to count the number of $k$-colourings of $G$ based on the cardinality of $\left|\left\{ c(x),c(y),c(z)\right\}\right|$.
	
	 When $\left|\left\{ c(x),c(y),c(z)\right\}\right| = 3$, there are 
	 \[
	 k(k-1)(k-2)(k-3)(k-4)(k-5)^{n-5}
	 \]
	 $k$-colourings of $G$.
	 When  $\left|\left\{ c(x),c(y),c(z)\right\}\right| = 2$, there are 
	 \[
	 3k(k-1)(k-2)(k-3)(k-4)^{n-5}
	 \]
	 $k$-colourings of $G$.
	 Finally, when $\left|\left\{ c(x),c(y),c(z)\right\}\right| = 1$, there are 
	 \[
	 k(k-1)(k-2)(k-3)^{n-5}
	 \]
	 $k$-colourings of $G$.
	Thus
	
		\begin{align*}
	P(G,\lambda) &= (\lambda-2)(\lambda-3)(\lambda-4)(\lambda-5)^{n-5} + 3(\lambda-2)(\lambda-3)(\lambda-4)^{n-5} + \lambda(\lambda-1)(\lambda-2)(\lambda-3)^{n-5}\\
	&= \lambda(\lambda-1)(\lambda-2)(\lambda-3)\left((\lambda-3)^{n-6}+3(\lambda-4)^{n-5}+(\lambda-4)(\lambda-5)^{n-5}\right)
	\end{align*}

	Consider the polynomial $g(n,\lambda) = (\lambda-3)^{n-6}+3(\lambda-4)^{n-5}+(\lambda-4)(\lambda-5)^{n-5}$.
	We may express this polynomial as:
	
	\[
	p(n,z)=\alpha_1(z)(\lambda_1(z))^{n-6}+\alpha_2(z)(\lambda_2(z))^{n-5}+\alpha_3(z)(\lambda_3(z))^{n-5}.
	\]

	Here the non-degeneracy conditions hold for $p(n,z)$. 
	Applying the Bereha-Kahne-Weiss Theorem and setting $|\lambda_1(z)|=|\lambda_3(z)|>|\lambda_4(z)|$ we solve for  $z=a+bi$ such that $|z-3|=|z-5|>|z-4|$.
	One can verify when $a=4$  we have $|z-3|=|z-5|$ and $|z-5|>|z-4|$ for all values of $b$.
	Thus the curve $z=4+bi$ is a limit of the roots for $2$-edge-coloured chromatic polynomial of $K_{2,n-2}$.
	As there are no restrictions on $b$, it then follows that $P(G,\lambda)$ can have complex roots of arbitrarily large modulus.
\end{proof}

Consider our above $K_{2,\ell-2}$ with partition $\{X,Y\}$ with $X = \{u,v\}$ and the 2-edge-coloured graph $G_{\ell} = \left(K_{2,\ell-2},R,B\right)$ with three of the edges incident with $v$ are blue and all other edges are red. 
Let $H_{n,\ell}$ be the $2$-edge-coloured graph formed by $G_{\ell}$ by joining $G_{\ell}$ with a copy of $K^r_n$ so that all joining edges are blue. 
Every vertex of  $K^r_n$ requires a distinct colour and the joining edges are all blue.
Therefore no vertex of $G$ can be assigned any of the $n$ colours appearing on the vertices of  $K^r_n$. 
Thus

\[ P(H_{n,\ell},\lambda)=\lambda(\lambda-1) \cdots (\lambda-(n-1))P(G_{\ell},\lambda -n)\]

Taken with Theorem~\ref{Thrm:complexroots}, this implies that the curve $f(n,b)=4+n+bi$ is also limit of the roots for $n\geq 1$. 
See Figures~\ref{figrootscomplex1} and \ref{figrootscomplex2} for a plot of the roots of these polynomials.

\begin{figure}
\centering
\begin{minipage}{.5\textwidth}
  \centering
  \includegraphics[width=.8\linewidth]{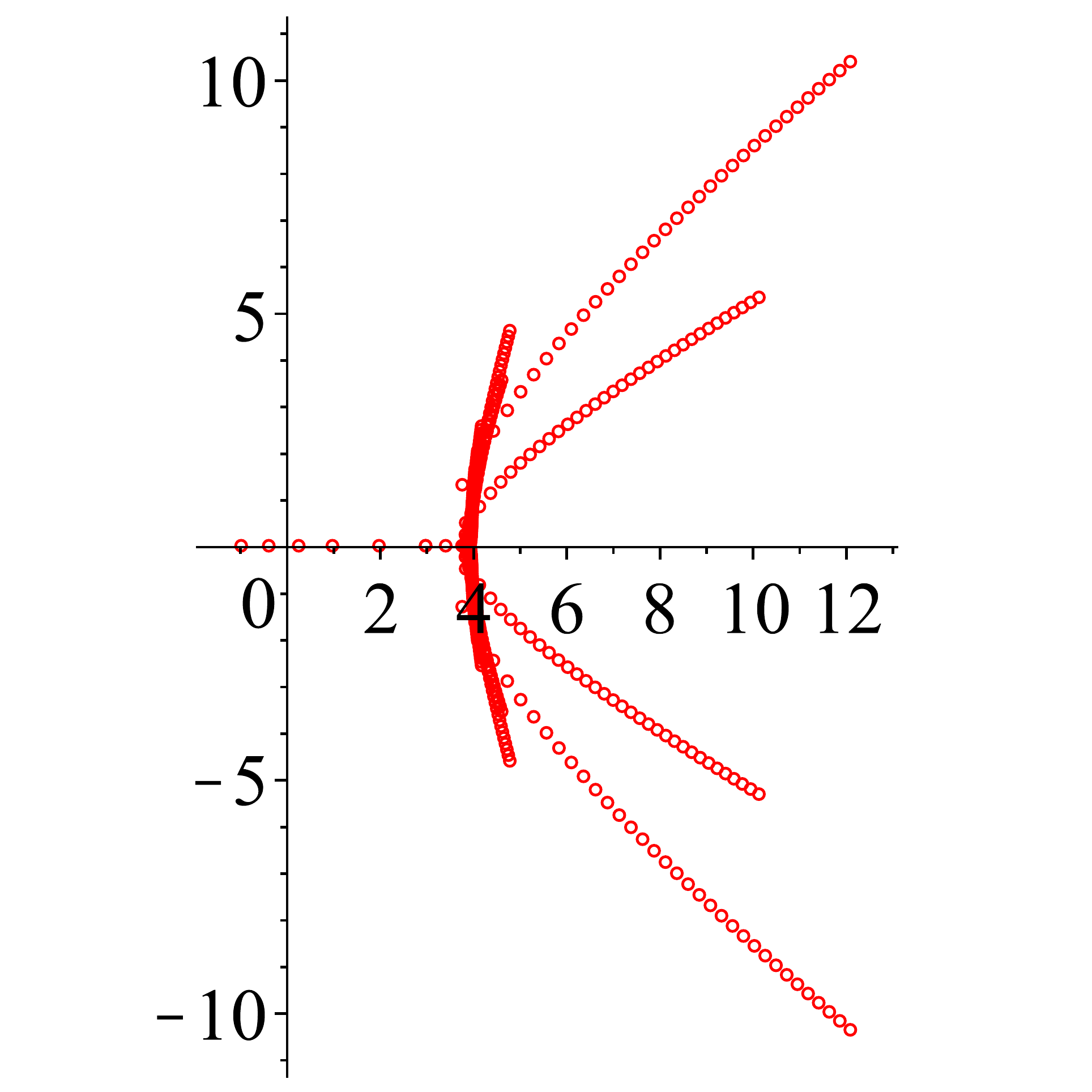}
  \caption{Complex Chromatic Roots of the 2-edge coloured $K_{2,n-2}$}
  \label{figrootscomplex1}
  \end{minipage}%
\begin{minipage}{.5\textwidth}
  \centering
  \includegraphics[width=.8\linewidth]{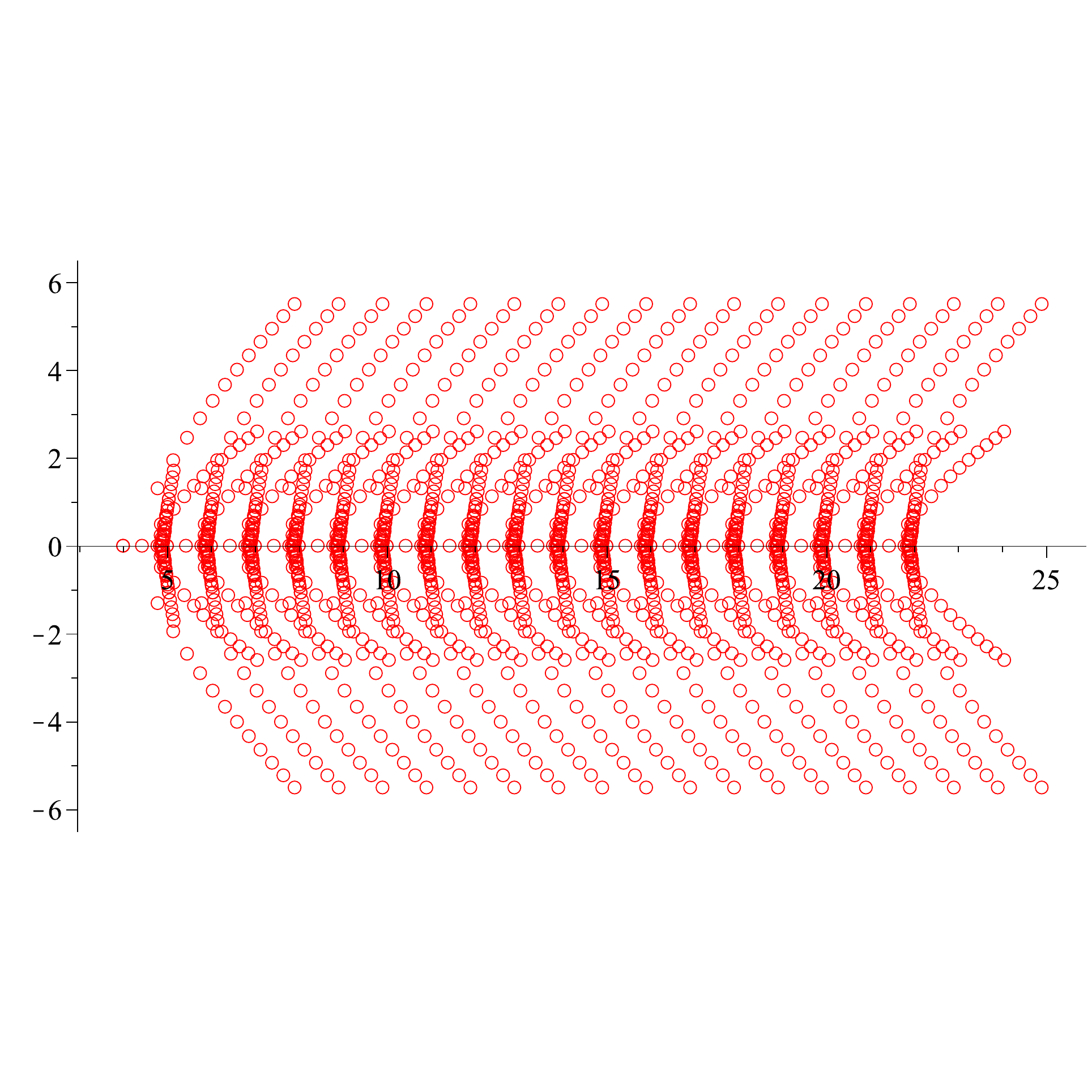}
  \caption{Complex chromatic roots of $H_{n,\ell}$ for $\ell=6,...,18$, $n=1,...,18$}
  \label{figrootscomplex2}
  \end{minipage}

\end{figure}

From the plots in Figure~\ref{figrootscomplex1} and Figure~\ref{figrootscomplex2} one can see that the closure of the roots contain an infinite number of vertical curves crossing the real axis at integer values of at least $4$. The real and complex chromatic roots (and hence monochromaitc roots) are dense in the complex plane \cite{S04}. It remains to be seen if bichromatic roots that dense in the complex plane.

\section{Further Remarks} \label{sec:discuss}
That the results and methods in Section \ref{sec:chromPoly} closely resemble results and methods for the chromatic polynomial comes as no surprise to these authors -- Equations \ref{math:complete} and \ref{math:reduce} both hold for the chromatic polynomial of a graph.
We note, however, that the standard \emph{delete and contract} technique for the chromatic polynomial of a graph does not apply, in general, for $2$-edge-coloured graphs. 
Deleting an edge, in some sense, forgets the colour of the adjacency between a pair of vertices -- important information for a vertex colouring a $2$-edge-coloured graph.
This implies that the $2$-edge-coloured chromatic polynomial is not an evaluation of the Tutte polynomial.

The results and methods in Section \ref{sec:chromPoly} closely mirror those for the oriented chromatic polynomial in \cite{C19}.
Such a phenomenon has been observed in the study of the chromatic number oriented graphs and $2$-edge-coloured graphs.
In \cite{RASO94} Raspaud and Sopena give an upper bound for the chromatic number of an orientation of a planar graph.
And in \cite{AM98} Alon and Marshall use the same techniques to derive the same upper bound for the chromatic number of a $2$-edge-coloured planar graph.
In this latter work, Alon and Marshall profess the similarity of their techniques to those appearing in \cite{RASO94}, yet see no way to derive one set of results from the other.
In the following years Ne\v{s}et\v{r}il and  Raspaud \cite{NR00} showed these results were in fact special cases of a more general result for  $(m,n)$-mixed  graphs --  graphs in which there are $m$ different arc colours and $n$ different edge colours.
Oriented graphs are graphs are $(0,1)$-mixed graphs, oriented graphs  are $(1,0)$-mixed graphs and $2$-edge-coloured graphs $(0,2)$-mixed graphs.

By way of homomorphism, one can define, for each $(m,n) \neq (0,0)$, colouring for $(m,n)$-mixed graphs that  generalizes graph colouring, oriented graph colouring and $2$-edge-coloured graph colouring.
As our results in Section \ref{sec:chromPoly} closely mirror those in \cite{C19} for oriented graphs we expect that the results in Section \ref{sec:chromPoly} in fact special cases of a more general result for the, to be defined, chromatic polynomial of an $(m,n)$-mixed graph.
Showing such a result would require successfully generalizing the notions of obstructing arcs/edge as well as the notions of $2$-dipath and bichromatic $2$-path.
This latter problem is considered in \cite{BDS17}.


\bibliographystyle{abbrv}
\bibliography{2ecrefs}

\end{document}